%All Manuscripts must be compatible with this LaTeX style of the Montes Taurus Journal of Pure and Applied Mathematics (MTJPAM)

\documentclass[12pt,reqno,oneside]{amsart}
\usepackage{amsmath,amssymb,amsfonts,amsthm,graphicx,cite,epsfig,makecell,array,mathrsfs}
\usepackage[varg]{pxfonts}
\usepackage{wrapfig}
\usepackage{enumitem}
\usepackage{graphicx}
\usepackage{subcaption}
\usepackage{epstopdf,footnote,lipsum}
\usepackage{eucal}

\usepackage[colorlinks=true,linkcolor=blue,citecolor=red]{hyperref}
\numberwithin{equation}{section}

%%%%%Line Number
\usepackage{lineno}

%%%%%
%\linenumbers

\newtheorem{theorem}{Theorem}[section]

\newtheorem{lemma}[theorem]{Lemma}

\theoremstyle{definition}
\newtheorem{definition}[theorem]{Definition}

\theoremstyle{remark}
\newtheorem{remark}[theorem]{Remark}
\numberwithin{equation}{section}

 \begin{document}

%% The title of the paper

\title[ Fekete-Szeg\"{o} inequalities for Gamma-Bazilevic functions ... ]
{  Gamma-Bazilevic functions related with generalized telephone numbers }
%% $^*$ for the corresponding author.

\author{Gangadharan Murugusundaramoorthy$^1$ ,kaliappan Vijaya$^1$, Hijaz Ahmad$^2,*$}
\address[G.Murugusundaramoorthy, K.vijaya]{$^1$Department of Mathematics, School of Advanced Science\\ \,\,Vellore Institute of Technology\\ Vellore - 632014,TN., India. \\
%\texttt{ORCID:} http://orcid.org/0000-0001-8285-6619\\
}
\email{gmsmoorthy@yahoo.com; kvijaya@vit.ac.in}
\address[Hijaz Ahmad ]{$^2,*$ Corresponding author ,Section of Mathematics, International Telematic University Uninettuno, Corso Vittorio Emanuele II, 39,00186 Roma, Italy\\
%\texttt{ORCID:} http://orcid.org/0000-0001-8285-6619\\
}
\email{hijaz555@gmail.com}

 \maketitle

\begin{abstract}
The purpose of this paper is to consider coefficient estimates in a
class of functions $\mathfrak{G}_{\vartheta}^{\kappa}(\mathcal{X},\varkappa)$ consisting of analytic
functions $f$ normalized by $f(0)=f'(0)-1=0$\ in the open unit disk  $\Delta=\{ z:z\in \mathbb{C}\quad \text{and}\quad \left\vert
z\right\vert <1\}$ subordinating generalized telephone numbers,
 to derive certain  coefficient estimates $a_2,a_3$ and Fekete-Szeg\"{o} inequality for $f\in\mathfrak{G}_{\vartheta}^{\kappa}(\mathcal{X},\varkappa)$. A similar results have been  done for the function $ f^{-1} $ and $\log\dfrac{f(z)}{z}.$Similarly application of our results to certain functions defined by using convolution products with a normalized
 analytic function is given, and in particular we state
Fekete-Szeg"{o} inequalities for 
subclasses  described through Poisson Borel and Pascal distribution series.
 \\
\textbf{Keywords:}  Analytic functions, starlike functions, convex
functions,  subordination,  Fekete-Szeg\"{o} inequality, Poisson distribution series, Borel distribution series
Hadamard product.  \\
\textbf{MSC(2010):} 30C80, 30C45
\end{abstract}
%\begin{center}{\bf This paper is dedicated to Professor Hari Mohan Srivastava \\on the occasion of his 80$^{th}$ Birthday.}
%\end{center}
\section{Introduction, Definitions and Preliminaries}
\subsection{Generalized telephone numbers (GTN)- $\mathfrak{T}_\varkappa(n)$:} The classical telephone numbers (TN), distinguished as involution numbers , are exact by the recurrence relation $$\mathfrak{T} (n) = \mathfrak{T} (n - 1) + (n- 1)\mathfrak{T} (n - 2) \quad for\quad n \geq 2$$ with
$$\mathfrak{T} (0) = \mathfrak{T} (1) = 1.$$ .
Associates of those numbers with symmetric groups have been perceived first time in 1800 by  Heinrich August Rothe, who piercing out that $\mathfrak{T} (n)$ is the variety of involutions (self-inverse permutations)in the symmetric groups
(see,as an instance,\cite{SC,DEK}). Due to the fact involutions resembles to standard younger tableaux,
it is vibrant that the $n^ {th}$ involution quantity is consistently the range of young tableaux on the set ${1, 2,..., n}$ (for information,see \cite{BS}). it's far really worth mentioning according to John Riordan, the above recurrence relation, in reality, yields the range of production styles in a cellphone machine
with $n$ subscribers(see \cite{JR}). In 2017, Wlochand Wolowiec-Musial \cite{AWM} familiarized
generalized telephone numbers (GTN) via the succeeding recursion
 $$ \mathfrak{T} (\varkappa, n) = \varkappa \mathfrak{T}(\varkappa, n - 1) + (n - 1)\mathfrak{T} (\varkappa, n- 2)\quad n \geq 0\quad{\text and}\quad\varkappa \geq 1$$ with  $$\mathfrak{T} (\varkappa, 0) = 1, \mathfrak{T} (\varkappa, 1) = \varkappa,$$ and deliberated some properties. In 2019, Bednarz
and Wolowiec-Musial\cite{UB} presented a new generalization of TN by
$$\mathfrak{T}_\varkappa(n) = \mathfrak{T}_\varkappa(n - 1) +\varkappa(n- 1)\mathfrak{T}_\varkappa(n- 2),\quad n \geq 2\quad{\text and}\quad\varkappa \geq 1$$ with  $$\mathfrak{T}_\varkappa(0) = \mathfrak{T}_\varkappa(1) = 1.$$
They provided the generating function, straight formula
and matrix generators for these numbers. Moreover, they acquired clarifications and
proved  few properties of these numbers related with congruence’s.
These days, they resulting the exponential generating function and the summation method $\mathfrak{T}_\varkappa(n)$ as follows:
\[ e^{x+\varkappa \frac{x^2}{2}}=\sum_{n=0}^{\infty}\mathfrak{T}_\varkappa(n) \frac{x^n}{n!}\quad (\varkappa\geq 1)
\]
As we are able to observe,if $\varkappa = 1,$ then we achieve classical telephone numbers $\mathfrak{T} (n).$
Clearly,  $\mathfrak{T}_\varkappa(n)$ is for a few values of $n$ as

\begin{center}
\begin{enumerate}
\item $\mathfrak{T}_{\varkappa}(0) =\mathfrak{T}_{\varkappa} = 1,$\\
\item $\mathfrak{T}_{\varkappa}(2) = 1 + \varkappa,$\\
\item $\mathfrak{T}_{\varkappa}(3) = 1 + 3\varkappa$ \\
\item $\mathfrak{T}_{\varkappa}(4) = 1 + 6\varkappa + 3\varkappa ^2$\\
\item $\mathfrak{T}_{\varkappa}(5) = 1 + 10\varkappa + 15\varkappa^2$\\
\item  $\mathfrak{T}_{\varkappa}(6) =
1 + 15\varkappa + 45\varkappa^2 + 15\varkappa^3.$
\end{enumerate}
\end{center}
In \cite{ED}, for$z\in
{\Delta}:=\left\{ z:z\in \mathbb{C}\quad \text{and}\quad \left\vert z\right\vert
<1\right\}
$ an unit disc, Deniz consider $$\mathcal{X}(z) := e^{(z+\varkappa \frac{z^2}{2})}=1+z+\frac{1+\varkappa}{2}z^{2}+\frac{1+3\varkappa}{6}z^{3}+\frac{3\varkappa^{2}+6\varkappa+1}{24}z^{4}+\frac{1 + 10\varkappa + 15\varkappa^2}{120}z^{5}+\cdots.
$$
. 
\subsection{ Subclasses of analytic functions $\mathfrak{A}$ } Denote by $\mathfrak{A}$  the class of analytic functions as given  by
\begin{equation} \label{e1.3}
  f(z)=z + \sum_{n=2}^{\infty}a_n z^n, z\in \Delta  .
\end{equation}
Also, denote $\mathfrak{S}$ be the
subclass of $\mathfrak{A}$ together with univalent functions in $\Delta$ with  $f(0)=0=f'(0)-1$ . In \cite{robert}, Robertson introduced the following classes:
 \begin{equation}\label{star}
\mathfrak{S}^*=\{f\in \mathfrak{S}: \Re\Big(\frac{zf'(z)}{f(z)}\Big)>0, \;  \quad (z\in \Delta)\}
  \end{equation}
   and
    \begin{equation}\label{convex}
 \mathfrak{C}=\{ f\in \mathfrak{S}:  \Re\Big( \frac{(zf'(z))'}{f'(z)}\Big)>0, \quad (z\in \Delta)\}.
     \end{equation}

      The class of functions fulfilling the analytic standards given  by  \eqref{star} and \eqref{convex} are referred to as as \textit{starlike} and \textit{convex} functions in $\Delta$ respectively.

Assuming $f_1, f_2 \in  \mathfrak{S} $ then we are saying that the function$f_1$
is subordinate to $f_2$ if  there exists a Schwarz charcterestic $\varpi(z)$, analytic in
$\Delta$ with
$ \varpi(0) = 0 \quad \text{and} \quad  \left| \varpi(z)\right|<1 \quad
(z \in \Delta), $ such that$ f_1(z) = f_1( \varpi (z)) \quad (z \in \Delta). $ 
 We
denote this subordination by means of
\[ f_1 \prec f_2 \quad \text{or} \quad f_1(z) \prec f_2(z) \quad (z \in
\Delta). \] Particularly, if  $f_2$ is univalent in $\Delta,$ the above
subordination is equal to$$ f_1(0) = f_2(0)\quad and \quad f_1(\Delta) \subset f_2(\Delta).$$
Let
\begin{equation}\label{3Int}
\mathfrak {S}^*(\psi)=\{ f  \in  \mathfrak{S}:  \frac{zf'(z)}{f(z)}\prec \psi(z)\}
\end{equation}
where $\psi(z)=1+m_1 z +m_2 z^2 +m_3 z^3 +\cdots.,  m_1>0.$
By varying the function $\psi$, several familiar classes can be obtained as illustrated below:
\begin{enumerate}
\item For $\psi =\frac{1+Az}{1+Bz}~(-1\leq B< A\leq 1)$, we get the class $\mathfrak {S}^*%
\left( A,B\right)$, see \cite{jan}.
Also by fixing $A=1-2\alpha$ and $B=-1$, we have $\mathfrak {S}^*
\left( \alpha \right) =\mathfrak{S}^{\ast }\left( 1-2\alpha ,-1\right) $ \cite{robert}.

\item In \cite{ronning} ,  by taking  $\psi =1+\frac{2}{\pi ^{2}}\left( \log \frac{1+\sqrt{z}}{1-\sqrt{z}%
}\right) ^{2}$, a new class  was defined and studied.
\item  Assuming  $\psi(z)
=:z+\sqrt{1+z^2},\ \ z\in {\Delta},$  Raina and Sokol\cite{RKSJ}and Sokol and Thomas \cite{sokol1} extensively discussed the geometric properties for $f\in{\mathfrak S}_S^*(\psi)$ \\
\item In \cite{8.2a},the class  ${\mathfrak S}_L^*(\psi)=\{ f  \in  \mathfrak{S}:  \frac{zf'(z)}{f(z)}\prec \sqrt{1+z}\},$ was studied and  further studied in \cite{mohsin}.
%\item For $\psi =z+\sqrt{1+z^{2}}$, the class is denoted by $\mathcal{S}%_{l}^{\ast },$ see \cite{RKSJ}.
\item The class  ${\mathfrak S}_C=\{ f  \in  \mathfrak{S}:  \frac{zf'(z)}{f(z)} \prec  1+\frac{4}{3}z+\frac{2}{3}z^2 \}$  was introduced in and investigated in \cite{car ,arifc}.
\item In \cite{7a,19}  the authors defined and
discussed the class  ${\mathfrak S}_e^*(\psi)=\{ f  \in  \mathfrak{S}:  \frac{zf'(z)}{f(z)}\prec e^{z}\}.$
\item For $\psi =1+\sin \left( z\right)$, the class is denoted by $\mathfrak {S}^*_{\sin }$, see \cite{CHO} .
\item For $\psi =\cosh \left( z\right)$, the class is denoted by $\mathfrak {S}^*
_{\cosh },$ see \cite{arif}.
\end{enumerate}
\par Lately, for  $\vartheta \geqq 0,\,\, \kappa \geqq 0$  and $ f \in
\mathfrak{A}$ Fitri
 and  Thomas
 \cite{Guo} studied the a new class $ {G}(\vartheta,\kappa)$ which holds the following:
\small{
\begin{eqnarray}
% \nonumber to remove numbering (before each equation)
\Re \left \{ \left[ \frac{zf'(z)}{(f(z)^{1-\kappa}z^\kappa} +\frac{zf''(z)} {f'(z)} +(\kappa-1) \left( \frac{zf'(z)}
{f(z)}-1 \right) \right]^\vartheta \left[\frac{zf'(z)}{(f(z)^{1-\kappa}z^\kappa}\right]^{1-\vartheta} \right\}> 0
\end{eqnarray}
}
and discussed its characterization results.
 Inspired fundamentally by the aforesaid works (see
\cite{SME,murugu,RKSJ,RBS,GMDT}), and recent work of Murugusundaramoorthy and Vijaya \cite{GMKV}, in this paper  first time we describe a new class $\mathfrak{G}_{\vartheta}^{\kappa}(\mathcal{X},\varkappa)$  as  given in Definition \ref{defn} which coalesces the many new  subclasses of $\mathfrak S^* $ and
$\mathfrak C $ in association with GTN.  First,we shall find estimations of $a_2$ and $a_3$ for
 $f\in\mathfrak{G}_{\vartheta}^{\kappa}(\mathcal{X},\varkappa)$ of the form \eqref{e1.3}  also for
$ f^{-1} \in \mathfrak{G}_{\vartheta}^{\kappa}(\mathcal{X},\varkappa)$ and $log\frac{f(z)}{z}.$
Further we prove the Fekete-Szeg\"{o} inequality  for a general class . Additionally we confer certain applications of our consequences  by way of convolution  to certain classes defined through Poisson, Borel and Pascal distributions .

Now, we define the following class $\mathfrak{G}_{\vartheta}^{\kappa}(\mathcal{X},\varkappa)$ :

\begin{definition}\label{defn}
  For $ \vartheta \geqq 0,\, \kappa \geqq 0$ a function
$f\in\mathfrak{A}$ is in the class $\mathfrak{G}_{\vartheta}^{\kappa}(\mathcal{X},\varkappa)$ if
\begin{eqnarray}
 &&\left[ \frac{zf'(z)}{(f(z))^{1-\kappa}z^\kappa} +\frac{zf''(z)} {f'(z)} +(\kappa-1) \left( \frac{zf'(z)}
{f(z)}-1 \right) \right]^\vartheta \left[\frac{zf'(z)}{(f(z))^{1-\kappa}z^\kappa}\right]^{1-\vartheta}\nonumber \\ \displaystyle &&\prec e^{(z+\varkappa \frac{z^2}{2}
)}=:\mathcal{X}(z); \;z=re^{i\theta}\in \Delta.
\end{eqnarray}
\end{definition}
By specializing the parameters $\vartheta $ and $\kappa$ we state the subsequent new subclasses of $\mathfrak{S}$ as illustrated under which are not discussed sofar for for functions associated with GTN:
\begin{remark}\label{GME-REM}
\
\\
\begin{enumerate}
  \item $\displaystyle \mathfrak{G}_{0}^{0}(\mathcal{X},\varkappa)\equiv \mathfrak{S}^{\ast}(\mathcal{X},\varkappa)=\left \{f\in\mathfrak{A}: \frac{zf'(z)} {f(z)}\prec e^{(z+\varkappa \frac{z^2}{2})} ;z=re^{i\theta}\in \Delta.\right\}$
  \item \begin{eqnarray*}\displaystyle \mathfrak{G}^{0}_{\vartheta}(\mathcal{X},\varkappa)\equiv \mathfrak{G}_\kappa(\mathcal{X},\varkappa)=\left \{ f\in\mathfrak{A}: \left[\frac{zf'(z)} {f(z)}\right]^{1-\vartheta}\left[\frac{(zf'(z))'} {f'(z)}\right]^\vartheta\prec e^{(z+\varkappa \frac{z^2}{2})} ;z=re^{i\theta}\in \Delta\right\}\nonumber\end{eqnarray*}
  \item $\mathfrak{G}_{0}^{1}(\mathcal{X},\varkappa)\equiv \mathfrak{C}(\mathcal{X},\varkappa)=\left \{ f\in\mathfrak{A}: \frac{(zf'(z))'} {f'(z)}\prec e^{(z+\varkappa \frac{z^2}{2})} ;z=re^{i\theta}\in \Delta\right\}\nonumber $\\
      \item $\mathfrak{G}^{\kappa}_{0}(\mathcal{X},\varkappa)\equiv \mathfrak{B}_\kappa(\mathcal{X},\varkappa)=\left \{ f\in\mathfrak{A}: \frac{zf'(z)} {f(z)} \left(\frac{f(z)}{z} \right)^{\kappa}\prec e^{(z+\varkappa \frac{z^2}{2})} ;z=re^{i\theta}\in \Delta\right\}\nonumber $
          \\
      \item $\mathfrak{G}^{1}_{1}(\mathcal{X},\varkappa)\equiv \mathfrak{R}(\mathcal{X},\varkappa)=\left \{ f\in\mathfrak{A}: f'(z)+\frac{zf''(z)} {f'(z)}\prec e^{(z+\varkappa \frac{z^2}{2})} ;z=re^{i\theta}\in \Delta\right\}\nonumber $
\end{enumerate}
\end{remark}
\subsection{A set of Lemmas}Recent years Fekete-Szeg\"{o} results for the
class of starlike, convex and various other subclasses of analytic functions, were studied
interested reader may refer  to\cite{GMKV,GMDT,HMSM1,HM,HMT} . In this article also we aimed to discuss the Fekete-Szeg\"{o} problem for $f\in\mathfrak{G}_{\vartheta}^{\kappa}(\mathcal{X},\varkappa)$ in association with GTN.
To prove our predominant end result, we take into account the following:
\par Let $\mathbf{P}$ denote class of functions with positive real part in $\Delta$, and be assumed as  of the form $$p(z)=1+c_1z+c_2z^2+\cdots, z\in \Delta.$$ 
%We also let Schwarz function $w(z) $,analytic in $\Delta$ with $ w(0)=0$ and $|w(z)|<1$ in $ \Delta.$ 
\begin{lemma}\cite{mamin}\label{maminlemma}
If $p\in \mathbf{P}$,
then
$$
|c_2-v c_1^2| \leqq \left\{  \begin{array}{lll}
-4v+2, & \mbox{ if } & v\leqq 0, \\[3mm]
2,     & \mbox{ if } & 0 \leqq v\leqq 1,\\[3mm]
4v-2,  & \mbox{ if } & v\geqq 1. \end{array}\right.
$$

While $v<0$ or $v>1$, the equality holds if and only if $p_1(z)$ is $\displaystyle
\frac{1+z}{1-z}$ or one in all its rotations.  If $0<v<1$, then equality holds if and
only if $p_2(z)=\displaystyle \frac{1+z^2}{1-z^2}$or considered one of its rotations . If
$v=0$, the equality holds if and only if
$$ p_3(z) =\left( \frac{1}{2}+\frac{1}{2}\eta \right) \frac{1+z}{1-z}
+ \left( \frac{1}{2}-\frac{1}{2}\eta \right) \frac{1-z}{1+z} \quad (0\leqq \eta
\leqq 1)
$$ or one in every of its rotations. If $v=1$, the equality holds if and best if $p_1$ is the
reciprocal of one of the functions such that the equality holds when
$v=0$.
\end{lemma}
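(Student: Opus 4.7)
The plan is to prove this via the Carath\'eodory--Herglotz integral representation of functions with positive real part, combined with a one--parameter reduction that turns the estimate into a simple optimization. Concretely, every $p\in\mathbf{P}$ can be written as $p(z)=\int_{|x|=1}\frac{1+xz}{1-xz}\,d\mu(x)$ for a probability measure $\mu$ on the unit circle, whence $c_n=2\int x^n\,d\mu$. Applying Jensen's inequality (or the elementary Libera--Zlotkiewicz argument) to $c_1^2=4\bigl(\int x\,d\mu\bigr)^2$ and $c_2=2\int x^2\,d\mu$, I would derive the standard identity
\begin{equation*}
2c_2 \;=\; c_1^{2} \;+\; (4-c_1^{2})\,x
\end{equation*}
for some $x\in\overline{\Delta}$ (with $|c_1|\le 2$). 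This single relation is the workhorse: it reduces the two complex parameters $c_1,c_2$ to a real magnitude $t:=|c_1|\in[0,2]$ plus a disk parameter $x$.

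Substituting the identity into $c_2-vc_1^{2}$ gives
\begin{equation*}
c_2 - v c_1^{2} \;=\; \tfrac{1}{2}(1-2v)\,c_1^{2} \;+\; \tfrac{1}{2}(4-c_1^{2})\,x .
\end{equation*}
Taking moduli and applying the triangle inequality with $|x|\le 1$ yields
\begin{equation*}
|c_2-vc_1^{2}| \;\le\; \tfrac{|1-2v|}{2}\,t^{2} \;+\; \tfrac{4-t^{2}}{2} \;=\; 2 + \tfrac{t^{2}}{2}\bigl(|1-2v|-1\bigr).
\end{equation*}
Now the proof reduces to maximizing the right--hand side over $t\in[0,2]$. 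If $|1-2v|\le 1$, i.e.\ $0\le v\le 1$, the coefficient of $t^{2}$ is non-positive, so the supremum $2$ is attained at $t=0$. If $|1-2v|>1$, the coefficient is positive and the maximum is attained at the endpoint $t=2$, producing $2|1-2v|$, which is $-4v+2$ when $v<0$ and $4v-2$ when $v>1$. Gluing the three cases gives the piecewise bound in the statement.

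For the equality statements, I would trace back through the above chain. When $v<0$ or $v>1$, equality forces $t=|c_1|=2$ and $|x|=1$ with aligned phases; the unique (up to rotation) $p\in\mathbf{P}$ realizing $|c_1|=2$ is $\frac{1+z}{1-z}$. When $0<v<1$, equality forces $t=0$ and $|x|=1$, which singles out the even function $\frac{1+z^{2}}{1-z^{2}}$ (again up to rotation). The endpoint cases $v=0$ and $v=1$ are the delicate part and the expected main obstacle: there equality can be achieved by a one--parameter family, and one has to argue that extremal $p$'s arise as convex combinations of two diametrically opposite Carath\'eodory atoms, yielding $p_3$ for $v=0$ and its reciprocal (in the sense of $p\mapsto \overline{p(\bar z)}^{-1}$--type symmetry of the coefficient functional) for $v=1$. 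Once these extremal measures are identified on the unit circle, the characterization follows by reading off the corresponding $p(z)$ from the Herglotz integral.
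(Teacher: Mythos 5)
The paper itself offers no proof of this lemma: it is quoted verbatim from Ma--Minda \cite{mamin} and used as a black box, so there is no internal argument to compare yours against. On its own merits, your route --- the Herglotz representation, the Libera--Zlotkiewicz-type identity reducing $(c_1,c_2)$ to a magnitude $t=|c_1|$ and a disk parameter $x$, and then a one-variable optimization in $t$ --- is the standard way to prove this estimate and is essentially sound; it recovers all three branches of the bound correctly. Two points need tightening. First, the identity as you wrote it, $2c_2=c_1^2+(4-c_1^2)x$, is only valid when $c_1$ is real and nonnegative; for complex $c_1$ the correct form is $2c_2=c_1^2+(4-|c_1|^2)x$, and without that correction the triangle-inequality step $\tfrac{1}{2}\left|4-c_1^2\right|\le\tfrac{1}{2}(4-t^2)$ is false. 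The clean fix is to note at the outset that $|c_2-vc_1^2|$ is invariant under the rotation $p(z)\mapsto p(e^{i\theta}z)$, so one may assume $c_1=t\in[0,2]$. Second, the equality characterizations at the endpoints $v=0$ and $v=1$ are asserted rather than carried out; the Herglotz-measure argument you outline does close them (for $v=0$, $|c_2|=2$ forces $\mu$ to be supported on two antipodal atoms with arbitrary weights, giving $p_3$; for $v=1$, the observation that the second coefficient of $1/p$ is $c_1^2-c_2$ transports the $v=0$ analysis to the reciprocal), but as written these are sketches, not proofs.
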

\iffalse
Although the above upper bound is sharp, when $0 < v <1$, it can be improved as
follows:
\[ |c_2-v c_1^2| +v|c_1|^2\leqq 2  \quad (0 < v\leqq 1/2 )\]
and
\[ |c_2-v c_1^2| +(1-v)|c_1|^2\leqq 2  \quad (1/2 < v\leqq 1 ).\]
\fi
We also need the following:
\begin{lemma}\label{lem1}\cite{r1b}
If $p\in \mathbf{P}$,
then 
\begin{align}
\mid c_{n} \mid&\leq 2\;\;\forall   n\geq1 \qquad {\rm and }\qquad
| c_{2}-\frac{c_{1}^{2}}{2}|\leq 2- \frac{|c_{1}|^{2}}{2}.\nonumber
\end{align}\end{lemma}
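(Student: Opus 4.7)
The plan is to reduce both assertions to elementary facts about Schwarz functions via the standard bijection with $\mathbf{P}$. Given $p \in \mathbf{P}$ with the stated expansion, I would set $\omega(z) = (p(z)-1)/(p(z)+1)$; then $\omega$ is analytic in $\Delta$ with $\omega(0) = 0$ and $|\omega(z)| < 1$, and inversion gives $p(z) = (1+\omega(z))/(1-\omega(z))$. Writing $\omega(z) = b_1 z + b_2 z^2 + \cdots$ and expanding the resulting geometric series, a short computation yields
\begin{equation*}
c_1 = 2 b_1, \qquad c_2 = 2(b_2 + b_1^2).
\end{equation*}

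For the bound $|c_n| \le 2$, the cleanest route is the Herglotz representation $p(z) = \int_{|x|=1} \frac{1+xz}{1-xz}\,d\mu(x)$ for some probability measure $\mu$ on the unit circle; expanding the Poisson-type kernel as a geometric series and matching coefficients produces $c_n = 2\int_{|x|=1} x^n\,d\mu(x)$, from which $|c_n| \le 2\mu(|x|=1) = 2$ is immediate. Alternatively, one can derive the same bound from Parseval applied to $\Re p$ on the circle $|z|=r$ followed by $r \to 1^{-}$.

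For the second estimate, substituting the formulas above yields
\begin{equation*}
c_2 - \tfrac{1}{2} c_1^2 = 2(b_2 + b_1^2) - 2 b_1^2 = 2 b_2.
\end{equation*}
I would then introduce $\psi(z) = \omega(z)/z = b_1 + b_2 z + \cdots$, which extends analytically to $\Delta$ because $\omega$ vanishes at the origin, and which satisfies $|\psi(z)| \le 1$ on $\Delta$ by the Schwarz lemma applied to $\omega$. Applying the Schwarz--Pick inequality to $\psi$ at $z=0$ gives $|\psi'(0)| \le 1 - |\psi(0)|^2$, that is, $|b_2| \le 1 - |b_1|^2$. Combined with $|c_1| = 2|b_1|$, this produces
\begin{equation*}
\Bigl|c_2 - \tfrac{1}{2} c_1^2\Bigr| = 2|b_2| \le 2 - 2|b_1|^2 = 2 - \tfrac{1}{2}|c_1|^2,
\end{equation*}
which is the stated inequality.

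The overall argument is classical and its individual steps are routine, so I do not anticipate a genuine obstacle. The one place that requires care is the Schwarz--Pick step on $\psi$: since $\psi(0) = b_1$ need not vanish, a naive appeal to Schwarz's lemma fails, and one must use the invariant form (equivalently, pre-compose with a disc automorphism carrying $0$ to $-b_1$) before reading off the derivative bound. Once this is set up, the remainder of the proof is pure expansion and algebraic manipulation.
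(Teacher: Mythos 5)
The paper does not prove this lemma at all: it is quoted verbatim from Grenander and Szeg\H{o} (reference \cite{r1b}) as a classical fact, so there is no in-paper argument to compare yours against. Your proposal is a correct, self-contained proof of both assertions. The coefficient relations $c_1=2b_1$, $c_2=2(b_2+b_1^2)$ from $p=(1+\omega)/(1-\omega)$ are right; the Herglotz/Carath\'eodory route to $|c_n|\le 2$ is the standard one; and the reduction of $c_2-\tfrac12 c_1^2$ to $2b_2$ followed by the Schwarz--Pick bound $|b_2|\le 1-|b_1|^2$ for the second coefficient of a Schwarz function is exactly the classical argument (your closing caveat about needing the invariant form rather than the bare Schwarz lemma, and implicitly the degenerate case $|\psi(0)|=1$ forcing $\psi$ constant so that $b_2=0$, is the only point requiring care, and you have handled it). In effect you have supplied the missing proof rather than an alternative to one.
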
 
\begin{lemma}\cite{r1d} \label{lemma2}
If $p\in \mathbf{P}$,
 and $ v\in \mathbb{C} $ ( complex numbers),
then
$$
|c_2- v c_1^2| \leqq 2 \max(1,|2v-1|).
$$
The result is sharp for the functions
$$
 p_1(z)=\frac{1+z^2}{1-z^2}, \quad  p_2(z)=\frac{1+z}{1-z}.
$$
\end{lemma}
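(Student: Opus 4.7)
The plan is to reduce the statement to the bound already available in Lemma~\ref{lem1}. Write
\[
c_2 - v c_1^2 \;=\; \left(c_2 - \frac{c_1^2}{2}\right) + \left(\frac{1}{2} - v\right) c_1^2,
\]
so that by the triangle inequality followed by the estimate $|c_2 - c_1^2/2| \leq 2 - |c_1|^2/2$ from Lemma~\ref{lem1},
\[
|c_2 - v c_1^2| \;\leq\; 2 - \frac{|c_1|^2}{2} + \left|\frac{1}{2} - v\right| |c_1|^2 \;=\; 2 + \frac{|c_1|^2}{2}\bigl(|2v-1| - 1\bigr).
\]
This puts everything in terms of $|c_1|$ and the quantity $|2v-1|-1$ whose sign governs the desired case split.

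Next I split into two cases depending on the size of $|2v-1|$. If $|2v-1| \leq 1$, then $|2v-1|-1 \leq 0$, and the last displayed quantity is bounded above by $2 = 2\max(1,|2v-1|)$ irrespective of $|c_1|$. If $|2v-1| > 1$, then the factor $|2v-1|-1$ is positive, so I use the Carath\'eodory bound $|c_1| \leq 2$ (also contained in Lemma~\ref{lem1}) to estimate $\tfrac{|c_1|^2}{2}(|2v-1|-1) \leq 2(|2v-1|-1)$, yielding
\[
|c_2 - v c_1^2| \;\leq\; 2 + 2\bigl(|2v-1|-1\bigr) \;=\; 2|2v-1| \;=\; 2\max(1,|2v-1|).
\]
Both cases combined give exactly the asserted inequality.

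For sharpness, one checks the two extremals supplied in the statement. The function $p_1(z) = (1+z^2)/(1-z^2)$ has $c_1 = 0$ and $c_2 = 2$, so $|c_2 - v c_1^2| = 2$, which is sharp whenever $|2v-1| \leq 1$. The function $p_2(z) = (1+z)/(1-z)$ has $c_1 = c_2 = 2$, giving $|c_2 - v c_1^2| = |2-4v| = 2|2v-1|$, which is sharp whenever $|2v-1| \geq 1$. The only mild obstacle is spotting the decomposition that makes Lemma~\ref{lem1} directly applicable; once that step is in place the proof is essentially a two-line case analysis and no further classical machinery (e.g., the full Carath\'eodory parametrization of $c_2$ in terms of $c_1$) is needed.
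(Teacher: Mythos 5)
Your proposal is correct and complete. Note that the paper itself gives no proof of this lemma at all: it is quoted verbatim from the literature (the citation \cite{r1d} to Libera--Zlotkiewicz), so there is no in-paper argument to compare against. What you have supplied is the standard self-contained derivation: the splitting $c_2 - v c_1^2 = \bigl(c_2 - \tfrac{1}{2}c_1^2\bigr) + \bigl(\tfrac{1}{2}-v\bigr)c_1^2$ reduces everything to the two facts recorded in Lemma~\ref{lem1}, namely $\bigl|c_2 - \tfrac{1}{2}c_1^2\bigr| \leq 2 - \tfrac{1}{2}|c_1|^2$ and $|c_1|\leq 2$, and your two-case analysis on the sign of $|2v-1|-1$ is airtight. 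The sharpness check is also right: $p_1$ has $c_1=0$, $c_2=2$ and attains the bound $2$ when $|2v-1|\leq 1$, while $p_2$ has $c_1=c_2=2$ and attains $2|2v-1|$ when $|2v-1|\geq 1$. One small caveat on your closing remark: the inequality $\bigl|c_2-\tfrac{1}{2}c_1^2\bigr|\leq 2-\tfrac{1}{2}|c_1|^2$ that you invoke is itself equivalent to the Carath\'eodory-type representation $2c_2 = c_1^2 + (4-|c_1|^2)\zeta$, $|\zeta|\leq 1$, so you have not truly avoided that machinery, only packaged it through Lemma~\ref{lem1}; but since the paper states that lemma as a black box, your usage is entirely legitimate.
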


\begin{lemma}\label{lem2}\cite{KeoghMer69}
If $p\in \mathbf{P}$,
then $ \hbar\in\mathbb{ C} $,
\begin{align}
\Big| c_{2}-\hbar \frac{c_{1}^{2}}{2}\Big|&\leq \; \max\{2,2|\hbar-1|\}=\left\{
    \begin{array}{ll}
      2, & \hbox{$0\leq \hbar\leq 2 $;} \\
      2|\hbar-1|, & \hbox{elsewhere.}\nonumber
    \end{array}
  \right.
\end{align}
The result is sharp for the functions defined by $ p_1(z)=\frac{1+z^{2}}{1-z^{2}}$ or $ p_2(z)=\frac{1+z}{1-z}$.\\
\end{lemma}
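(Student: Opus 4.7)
The plan is to deduce Lemma \ref{lem2} directly from the refined bound in Lemma \ref{lem1}, namely $|c_2 - c_1^2/2| \leq 2 - |c_1|^2/2$, by an elementary triangle-inequality-plus-optimisation argument. The key observation is that one can rewrite
\[
c_2 - \hbar \frac{c_1^2}{2} = \left(c_2 - \frac{c_1^2}{2}\right) + (1-\hbar)\frac{c_1^2}{2},
\]
so that the only piece depending on $c_2$ is isolated in a form already controlled by Lemma \ref{lem1}.

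Applying the triangle inequality and then Lemma \ref{lem1} to the first summand gives
\[
\left|c_2 - \hbar \frac{c_1^2}{2}\right| \leq \left(2 - \frac{|c_1|^2}{2}\right) + |1-\hbar|\,\frac{|c_1|^2}{2} = 2 + (|1-\hbar|-1)\,\frac{|c_1|^2}{2}.
\]
Setting $t = |c_1| \in [0,2]$, I view the right-hand side as a quadratic in $t$ whose behaviour is governed by the sign of $|1-\hbar|-1$. If $|1-\hbar| \leq 1$ (which for real $\hbar$ corresponds to $0 \leq \hbar \leq 2$), the coefficient of $t^2$ is non-positive, so the maximum is attained at $t=0$ and equals $2$. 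If $|1-\hbar| \geq 1$, the coefficient is non-negative, the maximum occurs at $t=2$, and evaluates to $2 + 2(|1-\hbar|-1) = 2|\hbar-1|$. Combining the two cases produces the asserted bound $\max\{2,\,2|\hbar-1|\}$.

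For the sharpness assertion I would simply exhibit the two extremal Carath\'eodory functions separately: $p_1(z) = (1+z^2)/(1-z^2)$ has $c_1=0$ and $c_2=2$, realising the value $2$ in the regime $|\hbar-1|\leq 1$; while $p_2(z) = (1+z)/(1-z)$ has $c_1=c_2=2$, giving $|c_2 - \hbar c_1^2/2| = |2-2\hbar| = 2|\hbar-1|$ in the complementary regime. I do not foresee a genuine obstacle here; the mildly delicate point is recognising that Lemma \ref{lem1} is precisely what is required, so that the argument remains self-contained within the lemmas already compiled and one need not appeal to the full Schur--Carath\'eodory parametrisation of the class $\mathbf{P}$.
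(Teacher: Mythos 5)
Your argument is correct, and it is worth noting that the paper itself offers no proof of this lemma at all: it is quoted verbatim from Keogh--Merkes \cite{KeoghMer69} (and is in fact just Lemma \ref{lemma2} of Libera--Zlotkiewicz restated with $v=\hbar/2$, since $2\max(1,|2v-1|)$ becomes $\max\{2,2|\hbar-1|\}$ under that substitution). Your derivation is therefore a genuine addition rather than a variant of an existing argument. The decomposition $c_2-\hbar c_1^2/2=(c_2-c_1^2/2)+(1-\hbar)c_1^2/2$ isolates exactly the quantity controlled by the sharpened inequality $|c_2-c_1^2/2|\le 2-|c_1|^2/2$ of Lemma \ref{lem1}, and the resulting bound $2+(|1-\hbar|-1)\,t^2/2$ with $t=|c_1|\in[0,2]$ is monotone in $t^2$, so the endpoint analysis ($t=0$ when $|1-\hbar|\le 1$, $t=2$ when $|1-\hbar|\ge 1$) is complete and gives $\max\{2,2|\hbar-1|\}$ in all cases, including complex $\hbar$; the restriction of the case split to $0\le\hbar\le 2$ in the displayed formula is just the real specialisation. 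Your sharpness check is also right: $p_1(z)=(1+z^2)/(1-z^2)$ has $c_1=0$, $c_2=2$ and attains $2$, while $p_2(z)=(1+z)/(1-z)$ has $c_1=c_2=2$ and attains $2|\hbar-1|$. What this approach buys is self-containment: everything reduces to the single refined coefficient inequality of Lemma \ref{lem1}, with no appeal to the Carath\'eodory--Toeplitz parametrisation or to the original Keogh--Merkes paper, and the same computation simultaneously proves Lemma \ref{lemma2}.
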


\section{ Coefficient Estimate}
\noindent By making use of the Lemma $\ref{maminlemma}$, we prove the following:
\begin{theorem}\label{gmsth1}
 Let \,$ \vartheta \geq 0 \,\, \text{and}\,\,   \kappa \geq 0.$ If  $f\in\mathfrak{G}_{\vartheta}^{\kappa}(\mathcal{X},\varkappa)$  be as in  $(\ref{e1.3})$ , then
\begin{eqnarray*}
|a_2|  &\leqq & \frac{1}{(1+\vartheta)(1+\kappa)},\\
|a_3| & \leqq & \frac{1}{(1+2\vartheta)(1+2\kappa)}\max \{1,\big|\displaystyle \frac
{\left (M\kappa^2+S\kappa+Q \right)
}{\left((1+\vartheta)(1+\kappa)\right)^2}+\frac{1+\varkappa}{2}\big|\}\end{eqnarray*} where
\begin{equation*}\label{m}
% \nonumber to remove numbering (before each equation)
  M =  \vartheta^2-\vartheta+1;\quad
S = 2\vartheta^2-4\vartheta+1 ; \quad
 Q = \vartheta^2-7\vartheta-2.
\end{equation*}
\end{theorem}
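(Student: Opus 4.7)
The plan is to translate the subordination into an identity involving a Carath\'eodory function and then match coefficients in the standard Fekete--Szeg\H{o} style. Since $f\in\mathfrak{G}_{\vartheta}^{\kappa}(\mathcal{X},\varkappa)$, there exists a Schwarz function $\varpi$ for which the bracketed quantity in Definition \ref{defn} equals $\mathcal{X}(\varpi(z))$. I would introduce the Carath\'eodory function $p(z)=(1+\varpi(z))/(1-\varpi(z))=1+c_1z+c_2z^2+\cdots\in\mathbf{P}$, invert this to get
\begin{equation*}
\varpi(z)=\tfrac{c_1}{2}z+\tfrac{1}{2}\!\left(c_2-\tfrac{c_1^{2}}{2}\right)\!z^{2}+\cdots,
\end{equation*}
and substitute into $\mathcal{X}(z)=1+z+\tfrac{1+\varkappa}{2}z^{2}+\cdots$, producing
\begin{equation*}
\mathcal{X}(\varpi(z))=1+\tfrac{c_1}{2}z+\left[\tfrac{c_2}{2}+\tfrac{(\varkappa-1)c_1^{2}}{8}\right]z^{2}+\cdots.
\end{equation*}

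Next I would expand the left-hand side through order $z^{2}$. Setting $N(z)=\tfrac{zf'(z)}{f(z)}\bigl(\tfrac{f(z)}{z}\bigr)^{\kappa}$ and $E(z)=N(z)+\tfrac{zf''(z)}{f'(z)}+(\kappa-1)\bigl(\tfrac{zf'(z)}{f(z)}-1\bigr)$, the standard series
\begin{equation*}
\tfrac{zf'(z)}{f(z)}=1+a_2z+(2a_3-a_2^{2})z^{2}+\cdots,\qquad \tfrac{zf''(z)}{f'(z)}=2a_2z+(6a_3-4a_2^{2})z^{2}+\cdots,
\end{equation*}
combined with the binomial expansions of $\bigl(\tfrac{f(z)}{z}\bigr)^{\kappa}$, $E(z)^{\vartheta}$ and $N(z)^{1-\vartheta}$, allow me to compute $E^{\vartheta}N^{1-\vartheta}=1+L_{1}z+L_{2}z^{2}+\cdots$ where $L_{1}$ is a constant multiple of $a_2$ and $L_{2}$ is a linear combination of $a_3$ and $a_2^{2}$ with polynomial coefficients in $\vartheta,\kappa$.

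Equating coefficients with those of $\mathcal{X}(\varpi(z))$ then gives two identities. The first identifies $a_2$ with a constant multiple of $c_1$, and together with $|c_1|\le 2$ produces $|a_2|\le 1/[(1+\vartheta)(1+\kappa)]$. Substituting this expression for $a_2$ into the second identity, $a_3$ emerges in the form $a_3=\lambda\bigl(c_2-\hbar\,\tfrac{c_1^{2}}{2}\bigr)$ for explicit $\lambda$ and $\hbar=\hbar(\vartheta,\kappa,\varkappa)$, where $\hbar-1$ is precisely the quantity appearing inside the absolute value of the theorem. Lemma \ref{lem2} (Keogh--Merkes) then furnishes $|a_3|\le|\lambda|\max\{2,2|\hbar-1|\}$, which is the claimed bound.

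The main technical obstacle is the bookkeeping inside $E^{\vartheta}N^{1-\vartheta}$: the coefficient of $a_2^{2}$ in $L_{2}$ comes from five separate contributions, namely $\vartheta E_{2}$, $(1-\vartheta)N_{2}$, the binomial corrections $\tfrac{\vartheta(\vartheta-1)}{2}E_{1}^{2}$ and $-\tfrac{\vartheta(1-\vartheta)}{2}N_{1}^{2}$, and the cross term $\vartheta(1-\vartheta)E_{1}N_{1}$. These must be collected and rearranged so that the resulting quadratic in $\kappa$ matches $\tfrac{1}{2}(M\kappa^{2}+S\kappa+Q)$ with $M=\vartheta^{2}-\vartheta+1$, $S=2\vartheta^{2}-4\vartheta+1$, $Q=\vartheta^{2}-7\vartheta-2$; once this identification is secured, the application of Lemma \ref{lem2} is routine.
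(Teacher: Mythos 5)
Your proposal follows essentially the same route as the paper: convert the subordination to a Carath\'eodory function $p=(1+\varpi)/(1-\varpi)$, expand $\mathcal{X}(\varpi(z))$ to get $1+\tfrac{c_1}{2}z+\bigl[\tfrac{c_2}{2}+\tfrac{(\varkappa-1)c_1^2}{8}\bigr]z^2+\cdots$, match coefficients against the expansion of the left-hand side to obtain $a_2=\tfrac{c_1}{2(1+\vartheta)(1+\kappa)}$ and $a_3$ in the form $\lambda\bigl(c_2-\hbar\tfrac{c_1^2}{2}\bigr)$, and finish with $|c_1|\le 2$ and a Keogh--Merkes-type bound (your Lemma \ref{lem2} is equivalent to the paper's Lemma \ref{lemma2} under $v=\hbar/2$). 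Your bookkeeping of the five contributions to the $a_2^2$ coefficient in $E^{\vartheta}N^{1-\vartheta}$ is the correct way to recover the paper's quadratic $M\kappa^2+S\kappa+Q$, so the plan is sound and matches the paper's argument.
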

These  consequences are sharp.
\begin{proof}Define  $P(z)\in \mathbf{P}$ by
\begin{eqnarray}\label{p3}
 P(z):&=&\frac{1+w(z)}{1-w(z)}=1+c_1z+c_2z^2+\cdots.\notag\\ \text{it is easy to see that}\notag\\
w(z)&=&\frac{P(z)-1}{P(z)+1}\notag\\ &=&\frac{1}{2}\left[c_{1}z+\left(c_{2}-\frac{c_{1}^2}{2}\right)z^{2}
+\left(c_{3}-c_{1}c_{2}+\frac{c_{1}^{3}}{4}\right)z^{3}+\cdots\right]. \end{eqnarray}
Since $w(z)$ is a Schwarz function, we see that $\Re(P(z))>0$ and
$P(0)=1$.Thus
\begin{eqnarray}\label{p5}
\mathcal{X}(w(z))&=& e^{(\frac{P(z)-1}{P(z)+1}+\varkappa \frac{[\frac{P(z)-1}{P(z)+1}]^2}{2}
)} \notag\\
&=&1+\frac{c_{1}}{2}z
  +\Big(\frac{c_{2}}{2}+\frac{(\varkappa-1)c_{1}^{2}}{8}\Big)z^{2}+\Big(\frac{c_{3}}{2}
  +(\varkappa-1)\frac{c_{1}c_{2}}{4}+\frac{(1-3\varkappa)}{48}c_1^3\Big)z^{3}+.....\notag\\
  \end{eqnarray}
If $f\in \mathfrak{G}_{\vartheta}^{\kappa}(\mathcal{X},\varkappa)$, then there is a Schwarz function $w(z) $,
analytic in $\Delta$ with $ w(0)=0$ and $|w(z)|<1$ in $ \Delta$ such that
\begin{eqnarray} \label{p2}
~&&  \left[ \frac{zf'(z)}{(f(z))^{1-\kappa}z^\kappa} +\frac{zf''(z)} {f'(z)} +(\kappa-1) \left( \frac{zf'(z)}
{f(z)}-1 \right) \right]^\vartheta \left[\frac{zf'(z)}{(f(z))^{1-\kappa}z^\kappa}\right]^{1-\vartheta}\nonumber\\&& =\mathcal{X}(w(z))\nonumber\\&&=e^{(w(z)+\kappa \frac{[w(z)]^2}{2}
)}.
\end{eqnarray}

For given $f(z)$ of the form $(\ref{e1.3})$,  a computation indicates that
\[ \frac{zf'(z)}{f(z)}= 1+a_2z+(2a_3-a_2^2)z^2+(3a_4+a_2^3-3a_3a_2)z^3+\cdots.\]
Similarly we have
\[ 1+\frac{zf''(z)}{f'(z)}= 1+2a_2z+(6a_3-4a_2^2)z^2+\cdots.\]
Let us outline  $W(z)$ by
 \begin{eqnarray}\label{WZ}
 W(z):&=&\left[ \frac{zf'(z)}{(f(z))^{1-\kappa}z^\kappa} +\frac{zf''(z)} {f'(z)} +(\kappa-1) \left( \frac{zf'(z)}
{f(z)}-1 \right) \right]^\vartheta \left[\frac{zf'(z)}{(f(z))^{1-\kappa}z^\kappa}\right]^{1-\vartheta} \nonumber\\
\end{eqnarray}
An easy computation indicates that
 \begin{eqnarray}W(z)
 %&=&\displaystyle  \frac{zf'(z)} {f(z)} \left(\frac{f(z)}{z} \right)^{\vartheta} + \kappa
%\left[ 1+ \frac{zf''(z)} {f'(z)} -\frac{zf'(z)} {f(z)}+ \vartheta \left( \frac{zf'(z)}
%{f(z)}-1 \right) \right] \notag
  &=&1+(1+\vartheta)(1+\kappa)a_2
  z + \displaystyle (1+2\vartheta)(2+\kappa)a_3 z^{2}\notag \\
&\quad \quad+&  \left (\kappa^2(\vartheta^2-\vartheta+1)+\kappa(2\vartheta^2-4\vartheta+1)+(\vartheta^2-7\vartheta-2)  \right)a_2^{2} z^2+\cdots\notag\\
&=&1+b_1z+b_2z^2+\cdots.\label{p4}
\end{eqnarray}

%In view of the equations (\ref{p2}), (\ref{p3}), (\ref{p5}), we have
%\begin{eqnarray}\label{p5}
%W(z)&=& e^{(\frac{P(z)-1}{P(z)+1}+\kappa \frac{[\frac{P(z)-1}{P(z)+1}]^2}{2})} \notag\\
%&=&1+\frac{c_{1}}{2}z
%  +\Big(\frac{c_{2}}{2}+\frac{(\varkappa-1)c_{1}^{2}}{8}\Big)z^{2}+\Big(\frac{c_{3}}{2}
 % +(\varkappa-1)\frac{c_{1}c_{2}}{4}+\frac{(1-3\varkappa)}{48}c_1^3\Big)z^{3}+.....\notag\\  \label{3.6}
%\end{eqnarray}
Now by \eqref{p5} and \eqref{p4},
\begin{equation}\label{b1b2}
  b_1=\frac{c_1}{2} \quad \quad {\mbox{and}} \quad \quad
b_2=\frac{c_{2}}{2}+\frac{(\varkappa-1)c_{1}^{2}}{8}.
\end{equation}

In view of the equation  (\ref{p4}) and \eqref{b1b2}, we see that
\begin{eqnarray}
b_1 &=&(1+\vartheta)(1+\kappa)a_2,\label{GMB1}\\
b_2 &=&  \displaystyle (1+2\vartheta)(2+\kappa)a_3 \notag \\
&\quad \quad+&  \left (\kappa^2(\vartheta^2-\vartheta+1)+\kappa(2\vartheta^2-4\vartheta+1)+(\vartheta^2-7\vartheta-2)  \right)a_2^{2}\label{GMB2}
\end{eqnarray}
or equivalently, we have
\begin{eqnarray}
a_2  &=& \frac{c_1}{2(1+\vartheta)(1+\kappa)},\label{gmsa2}\\
a_3  &=& \frac{1}{(1+2\vartheta)(1+2\kappa)} \left( \frac{c_2}{2} -\frac{c_1^2}{8}
\left[1-\varkappa- \displaystyle \frac
{2\left (\kappa^2(\vartheta^2-\vartheta+1)+\kappa(2\vartheta^2-4\vartheta+1)+(\vartheta^2-7\vartheta-2)  \right)
}{\left((1+\vartheta)(1+\kappa)\right)^2}\right]\right)\notag\\
&=& \frac{1}{2(1+2\vartheta)(1+2\kappa) } \left( c_2 -
\frac{c_1^2}{4}\left[1-\varkappa- \displaystyle \frac
{2\left (\kappa^2(\vartheta^2-\vartheta+1)+\kappa(2\vartheta^2-4\vartheta+1)+(\vartheta^2-7\vartheta-2)  \right)
}{\left((1+\vartheta)(1+\kappa)\right)^2}\right]\right).\notag\\\label{gmsa3}
\end{eqnarray}
For brevity we let
\begin{equation}\label{m}
% \nonumber to remove numbering (before each equation)
  M =  \vartheta^2-\vartheta+1;\quad
S = 2\vartheta^2-4\vartheta+1 ; \quad
 Q = \vartheta^2-7\vartheta-2
\end{equation}

\begin{eqnarray}\label{a3new}
% \nonumber to remove numbering (before each equation)
  a_3 &=& \frac{1}{2(1+2\vartheta)(1+2\kappa) } \left( c_2 -
\frac{c_1^2}{4}\left[1-\varkappa- \displaystyle \frac
{2\left (\kappa^2(\vartheta^2-\vartheta+1)+\kappa(2\vartheta^2-4\vartheta+1)+(\vartheta^2-7\vartheta-2)  \right)
}{\left((1+\vartheta)(1+\kappa)\right)^2}\right]\right).\notag\\
  &=& \frac{1}{2(1+2\vartheta)(1+2\kappa) } \left( c_2 -
\frac{c_1^2}{4}\left[1-\varkappa- \displaystyle \frac
{2\left (M\kappa^2+S\kappa+Q \right)
}{\left((1+\vartheta)(1+\kappa)\right)^2}\right]\right).
\end{eqnarray}
Now by taking absolute on\eqref{gmsa2}  and applying Lemma \ref{lem1}, we get
\[|a_2 | \leqq \frac{1}{(1+\vartheta)(1+\kappa)}
\]
and  by taking absolute on \eqref{a3new} and applying  Lemma \ref{lemma2}
%using the estimate $ |c_2- v c_1^2| \leqq 2 \max(1,|2v-1|)$ given in
 we have
\begin{eqnarray*}
   |a_3|&\leqq& \frac{1}{(1+2\vartheta)(1+2\kappa)}\\&\times&\max \{1,\big|2\times\frac{1}{4}\left[1-\varkappa- \displaystyle \frac
{2\left (M\kappa^2+S\kappa+Q \right)
}{\left((1+\vartheta)(1+\kappa)\right)^2}\right]-1\big|\}\\
   %&=& \frac{1}{(\vartheta+2)(1+2\kappa)}\max \{1,\big|\left(\frac{1}{2}+ \frac{\mathcal{X}(\vartheta,\kappa)}{2}\right)-1\big|\}\\
   &=& \frac{1}{(1+2\vartheta)(1+2\kappa)}\max \{1,\frac{1}{2}\big|-\left(\displaystyle \frac
{2\left (M\kappa^2+S\kappa+Q \right)
}{\left((1+\vartheta)(1+\kappa)\right)^2}\right)-1-\varkappa\big|\}\\
&=& \frac{1}{(1+2\vartheta)(1+2\kappa)}\max \{1,\big|\displaystyle \frac
{\left (M\kappa^2+S\kappa+Q \right)
}{\left((1+\vartheta)(1+\kappa)\right)^2}+\frac{1+\varkappa}{2}\big|\}.
   \end{eqnarray*}
   The first two bounds  are sharp for the function $f:
\Delta\longrightarrow \mathbb{C}$ given by
%\begin{align*}
%f(z)&=\int_{0}^{z}\psi(t)dt\\&=\int_{0}^{z}e^{t+\frac{\vartheta
%t^{2}}{2}}dt\\&=z+\frac{z^{2}}{2}+\frac{1+\vartheta}{6}z^{3}+\frac{1+3\vartheta}{24}z^{4}+\frac{3\vartheta^{2}+6\vartheta+1}{120}z^{5}+\cdots.
%\end{align*}
 %  To show that the bounds are sharp,we define the functions $K_{\phi_n}(z)$\newline
%$\phi_n= \mathcal{X}(z^{n-1})(n=2,3,\ldots)$ with $K_{\phi_n} (0)=0=[K_{\phi_n}]'(0)-1 $, by
% \begin{eqnarray*}
%& \frac{z(K_{\phi_n})'(z)} {K_{\phi_n}(z)} \left(\frac{K_{\phi_n}(z)}{z}
%\right)^{\vartheta} + \kappa \left[ 1+ \frac{z(K_{\phi_n})''(z)} {(K_{\phi_n})'(z)}
%-\frac{z(K_{\phi_n})'(z)} {K_{\phi_n}(z)}+ \vartheta \left( \frac{z(K_{\phi_n})'(z)}
%{K_{\phi_n}(z)}-1 \right) \right]\\&  =\mathcal{X}(z^{n-1}).
% \end{eqnarray*}
 %Clearly the functions $K_{\mathcal{X}_n}\in \mathfrak{G}_{\vartheta,\kappa}(\mathcal{X}) $.
%we write $K_{\mathcal{X} }:=K_{\mathcal{X}_2}$.
%If $\mu < \sigma_1$  or  $\mu > \sigma_2$, then the equality holds if and only if
%$f$ is $K_\phi$ or one of its rotations. When $\sigma_1 < \mu < \sigma_2$,
%That is ,when
%$ n=2~~\text{we get}\quad \mathcal{X}(z)=e^{z+\frac{\varkappa
%z^{2}}{2}}.$ Thus,
\begin{align*}
f(z)&=\int_{0}^{z}\mathcal{X}(t)dt\\&=\int_{0}^{z}e^{t+\frac{\varkappa
t^{2}}{2}}dt\\&=z+\frac{z^{2}}{2}+\frac{1+\varkappa}{6}z^{3}+\frac{1+3\varkappa}{24}z^{4}+\frac{3\varkappa^{2}
+6\varkappa+1}{120}z^{5}+\cdots.
\end{align*}
Here we have $b_1=1$ and $b_2=1/2.$
By using \eqref{GMB1} and \eqref{b1b2}, we get $$|a_2|=\frac{1}{(1+\vartheta)(1+\kappa)}$$ and
again by using\eqref{b1b2}, \eqref{GMB2} we have
 $$c_{2}+\frac{(\varkappa-1)c_{1}^{2}}{4}= (1+2\vartheta)(1+2\kappa)a_3+\left (\displaystyle (\kappa^2(\vartheta^2-\vartheta+1)+\kappa(2\vartheta^2-4\vartheta+1)+(\vartheta^2-7\vartheta-2)  \right)a_2^{2}
.$$
Substituting  for $a_2=\frac{1}{(1+\vartheta)(1+\kappa)}$  simple calculation and taking absolute value gives
$$|a_3|=\frac{1}{2(\vartheta+2)(1+2\kappa)}\left|\frac
{\vartheta^{2} + \vartheta- \displaystyle 2(\vartheta+3)\kappa -2
}{(\left(1+\vartheta)(1+\kappa)\right)^2}-\varkappa-1\right|.$$
 \end{proof}
 By way of assuming $\vartheta = 0
\,\, \text{and}\,\,   \kappa \geqq 0 $ we state the following
\begin{remark}
 If  $f\in \mathfrak{G}_{\kappa}(\mathcal{X},\varkappa)$ and as in $(\ref{e1.3})$  then
\begin{eqnarray*}
|a_2|  &\leqq & \frac{1}{1+\kappa},\\
|a_3| & \leqq & \frac{1}{2(1+2\kappa)}\max \{1,\big|\frac
{\kappa^2+8\kappa +3
}{2(1+\kappa)^2}+\varkappa\big|\}=\frac{1}{2(1+2\kappa)}\left(\frac
{\kappa^2+8\kappa +3
}{2(1+2\kappa)(1+\kappa)^2}+\varkappa\right).\end{eqnarray*}
\end{remark}
By way of fixing $\vartheta = 0 =\kappa
$ we state the following
\begin{remark}
  If  $f\in\mathfrak{S}^{*}(\mathcal{X},\varkappa)$ and as assumed in $(\ref{e1.3})$  then
\begin{eqnarray*}
|a_2|  \leqq  1,\qquad {\text and~}\qquad
|a_3|  \leqq \frac{1}{2} \max \{1,\big|\frac
{3
}{2}+\varkappa\big|\}=\frac
{1
}{2}\big(\frac
{3
}{2}+\varkappa\big).\end{eqnarray*}
\end{remark}
By way of assuming $\vartheta = 0
\,\, \text{and}\,\,   \kappa=1 $ we state the following
\begin{remark}
  If  $f\in\mathfrak{C}(\mathcal{X},\varkappa)$ and as in $(\ref{e1.3}),$ then
\begin{eqnarray*}
|a_2|  \leqq  \frac{1}{2},\qquad {\text and}\qquad
|a_3|  \leqq  \frac{1}{6}\max \{1,\big|\frac
{1
}{2}+\varkappa\big|\}=\frac{1}{6}\big(\frac
{1
}{2}+\varkappa\big).\end{eqnarray*}
\end{remark}By way of letting $\kappa= 0
$ we state the following
\begin{remark}
  If  $f\in\mathfrak{G}_{\vartheta}^{0}(\mathcal{X},\varkappa)=\mathfrak{B}_{\vartheta}(\mathcal{X},\varkappa)$ and as in $(\ref{e1.3})$ , then
\begin{eqnarray*}
|a_2|  &\leqq & \frac{1}{1+\vartheta},\\
|a_3| & \leqq &\frac{1}{\vartheta+2}\max \{1,\frac{1}{2}\big|\left(\frac
{\vartheta^{2} + \vartheta-2
}{\left(1+\vartheta\right)^2}\right)-1-\varkappa\big|\}=\frac{1}{2(1+\vartheta)}\left(\frac
{\vartheta +3
}{(1+\vartheta)^2}+\varkappa\right).\end{eqnarray*}
\end{remark}By way of fixing $\vartheta = 1
\,\, \text{and}\,\,   \kappa =0 $ we state the following
\begin{remark}
If  $f\in \mathfrak{R}(\mathcal{X},\varkappa)$ given by $(\ref{e1.3})$  then
\begin{eqnarray*}
|a_2|  &\leqq & \frac{1}{2},\\
|a_3| & \leqq & \frac{1}{3}\max \{1,\frac{1}{2}\big|1+\varkappa\big|\}=\frac{1}{6}\left(1+\varkappa\right).\end{eqnarray*}
\end{remark}

%################################################ THEOREM-2##############################################
\section{Fekete-Szeg\H{o} type problems}

\begin{theorem}\label{th1}
 Let \,$ 0 \leqq \mu \leqq1,\,\,   \vartheta \geqq 0
\,\, \text{and}\,\,   \kappa \geqq 0.$ If  $f\in\mathfrak{G}_{\vartheta}^{\kappa}(\mathcal{X},\varkappa)$ and assumed as in  $(\ref{e1.3})$ then
\begin{eqnarray*}
|a_3-\mu a_2^2| &\leqq & \left\{  \begin{array}{lll}\displaystyle \frac{1}{2 \mathbf{L}}
\left( 1+\varkappa+\frac{\aleph} { \mathbf{W}^{2}} \right), & \mbox{ if } &
 \mu \leqq \sigma_1, \\[5mm] \displaystyle
\frac{1}{\mathbf{L}}, & \mbox{ if } & \sigma_1\leqq \mu \leqq \sigma_2, \\ [5mm]
\displaystyle \frac{-1}{2 \mathbf{L}} \left( 1+\varkappa+\frac{\aleph} { \mathbf{W}^{2}} \right),
& \mbox{ if } &
\mu \geqq \sigma_2,\\
\end{array}\right.
\end{eqnarray*}
where, for convenience,
\begin{eqnarray}
\nonumber \sigma_1 = \frac{(\varkappa-1) \mathbf{W} ^2 +2( M\kappa^{2}+ S\kappa+Q)
}
{2 \mathbf{L} };
\sigma_2 = \frac{\varkappa \mathbf{W} ^2 +2( M\kappa^{2}+ S\kappa+Q)
}
{2 \mathbf{L} };
%\sigma_3 = \frac{\mathbf{w}^2 +2(\vartheta+3)\kappa- \mathbf{m} }{2\mathbf{q}},
\end{eqnarray}
\begin{equation} \label{2.1}
  \aleph    :=
2( M\kappa^{2}+ S\kappa+Q)
 - 2 \mu \mathbf{L},
\end{equation}
%\begin{equation}\label{rho}\mathbf{m}:= \vartheta^{2}+\vartheta-2,\end{equation}
\begin{equation}\label{xi}
\mathbf{L}:= (1+2\vartheta)(1+2\kappa),
\end{equation}
and
\begin{equation}\label{tau}
\mathbf{W}:= (1+\vartheta)(1+\kappa)
\end{equation} and $M,S,Q$ are assumed as in \eqref{m}.
\iffalse
 Further, if $\sigma_1\leqq \mu\leqq \sigma_3$, then
\[|a_3-\mu a_2^2|+
\frac{ \mathbf{w}^2}{2\mathbf{q} } \left( 1+\frac{\aleph }{ \mathbf{w}^2}
\right) |a_2|^2 \leqq \frac{1}{\mathbf{q}}.
 \]
If $\sigma_3\leqq \mu\leqq \sigma_2$, then
\[
|a_3-\mu a_2^2|+ \frac{ \mathbf{w}^2}{2 \mathbf{q}} \left(3-\frac{\aleph
}{\mathbf{w}^2} \right) |a_2|^2 \leqq \frac{1}{\mathbf{q}}.
 \]
 These results are sharp.
 \fi
\end{theorem}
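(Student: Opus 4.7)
The plan is to reduce $|a_3 - \mu a_2^2|$ to a single expression of the form $\frac{1}{2\mathbf{L}}|c_2 - v c_1^2|$, where $v$ is a scalar depending on $\mu,\vartheta,\kappa,\varkappa$, and then invoke the sharp three-case Ma-Minda estimate of Lemma~\ref{maminlemma} applied to the function $P\in\mathbf{P}$ constructed in \eqref{p3}.

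First I would reuse the representations already extracted in the proof of Theorem~\ref{gmsth1}, namely $a_2 = c_1/(2\mathbf{W})$ from \eqref{gmsa2} and the formula \eqref{a3new} for $a_3$. Subtracting $\mu a_2^2 = \mu c_1^2/(4\mathbf{W}^2)$ from \eqref{a3new}, the common factor $1/(2\mathbf{L})$ pulls out of the bracket and the coefficient of $c_1^2/4$ collapses into $1 - \varkappa - \aleph/\mathbf{W}^2$, with $\aleph$ precisely as defined in \eqref{2.1}. Thus
$$a_3 - \mu a_2^2 \;=\; \frac{1}{2\mathbf{L}}\bigl(c_2 - v c_1^2\bigr), \qquad v := \frac{1}{4}\Bigl(1 - \varkappa - \frac{\aleph}{\mathbf{W}^2}\Bigr).$$

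Next I would feed this into Lemma~\ref{maminlemma}. The three regimes $v \leq 0$, $0 \leq v \leq 1$, $v \geq 1$ get translated into thresholds on $\mu$: clearing the denominator $\mathbf{W}^2$ and isolating $\mu$ shows that $v \leq 0$ is equivalent to $\mu \leq \sigma_1$, while $v \geq 1$ is equivalent to $\mu \geq \sigma_2$. In the first regime the upper bound $-4v + 2$ simplifies, by the very definition of $v$, to $1 + \varkappa + \aleph/\mathbf{W}^2$, producing the first branch of the theorem; the middle regime gives the constant bound $2$ and hence $1/\mathbf{L}$; and in the last regime $4v - 2$ simplifies to $-(1 + \varkappa + \aleph/\mathbf{W}^2)$, producing the branch with leading factor $-1/(2\mathbf{L})$.

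The main obstacle is purely algebraic bookkeeping: one has to carry $\vartheta, \kappa, \varkappa, \mu$ through the regrouping accurately, and verify that the stated breakpoints $\sigma_1, \sigma_2$ coincide with the translations of $v=0$ and $v=1$ through the formula for $v$. Sharpness in each regime then transfers directly from Lemma~\ref{maminlemma}: the extremal Carath\'eodory functions ($(1+z)/(1-z)$ and its rotations at the endpoints, $(1+z^2)/(1-z^2)$ in the middle) get pulled back through the Schwarz correspondence $w = (P-1)/(P+1)$ of \eqref{p3} and then integrated up through \eqref{p2} to produce an extremal $f \in \mathfrak{G}_{\vartheta}^{\kappa}(\mathcal{X},\varkappa)$.
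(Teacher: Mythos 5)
Your proposal is essentially identical to the paper's own proof: the authors likewise combine \eqref{gmsa2} and \eqref{gmsa3} to write $a_3-\mu a_2^2=\frac{1}{2\mathbf{L}}\left(c_2-vc_1^2\right)$ with $v=\frac{1}{4}\left[1-\varkappa-\frac{\aleph}{\mathbf{W}^2}\right]$ and then cite Lemma~\ref{maminlemma}, so the approach and the key lemma coincide. One caution on the bookkeeping step you defer: the equivalence $v\leqq 0\Leftrightarrow\mu\leqq\sigma_1$ does check out, but $v\geqq 1$ works out to $\mu\geqq\frac{(\varkappa+3)\mathbf{W}^2+2(M\kappa^2+S\kappa+Q)}{2\mathbf{L}}$, not to the $\sigma_2$ printed in the statement (whose numerator has $\varkappa\mathbf{W}^2$ and in fact corresponds to $v\geqq\tfrac14$); so the verification you promise would surface a discrepancy in the stated threshold rather than confirm it, a check the paper's own proof also omits.
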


\begin{proof}
 Now by using \eqref{gmsa2} and \eqref{gmsa3} , we get
\begin{eqnarray*}
a_3-\mu a_2^2
&=&\frac{1}{2(1+2\vartheta)( 1+ 2 \kappa) }\left(c_2-\frac{c_1^2}{4}\times \right.\\&~& \left.\left[1-\varkappa-\frac{2( M\kappa^{2}+ S\kappa+Q)
 - 2 \mu (1+2\vartheta) ( 1+ 2\kappa)}{\left( (1+\vartheta)( 1 + \kappa)\right)^2}\right]\right)\\
 %&=&\frac{1}{2(1+2\vartheta)( 1+ 2 \kappa) }\left(c_2-\frac{c_1^2}{4}\right.\\&~& \left.\left[1-\varkappa+ \frac{ \vartheta^{2}+ \vartheta-2-2(\vartheta +3) \kappa
 %+ 2 \mu (\vartheta+2) ( 1+ 2\kappa)}{\left( (1+\vartheta)( 1 + \kappa)\right)^2}\right]\right)\\
 &=&\frac{1}{2(\vartheta+2)( 1+ 2 \kappa) }\left(c_2-v c_1^2\right)
\end{eqnarray*}
where
\begin{eqnarray*} v:&=& \frac{1}{4}\left[1-\varkappa-\frac{2( M\kappa^{2}+ S\kappa+Q)
 - 2 \mu (1+2\vartheta) ( 1+ 2\kappa)}{\left( (1+\vartheta)( 1 + \kappa)\right)^2}\right]\\
&=& \frac{1}{4}\left[1-\varkappa-\frac{2( M\kappa^{2}+ S\kappa+Q)
 - 2 \mu \mathbf{L}}{\mathbf{W}^2}\right].
\end{eqnarray*} The proclamation of  Theorem \ref{th1} now trails by applying
Lemma~\ref{maminlemma}.

\end{proof}

Using  Lemma $\ref{lemma2}$, we directly find the following:

\begin{theorem} \label{hms} Let $   0 \leqq  \vartheta
\leqq 1,\text{and}\,\,  0 \leqq \kappa \leqq 1.$
If $ f \in \mathfrak{G}_{\vartheta}^{\kappa}(\mathcal{X},\varkappa),$ then for 
$\mu\in\mathbb{C}$, we have
\begin{eqnarray*}
% \nonumber to remove numbering (before each equation)
|a_3-\mu a_2^2|&\leq& \frac{1}{(\vartheta+2)(1+2\kappa)}\\&\times& \max\left\{1, \frac{1}{2}\left| -1-\varkappa- \frac{2( M\kappa^{2}+ S\kappa+Q)
 - 2 \mu (1+2\vartheta) ( 1+ 2\kappa)}{\left( (1+\vartheta)( 1 + \kappa)\right)^2}\right| \right\}\\&\leq& \frac{1}{\mathbf{L}}\max\left\{1, \frac{1}{2}\left| 1+\varkappa+\frac{2( M\kappa^{2}+ S\kappa+Q)
 - 2 \mu \mathbf{L}}{\mathbf{W}^2}\right| \right\}.
\end{eqnarray*}
\end{theorem}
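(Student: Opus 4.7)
The plan is to recycle the computation already performed in the proof of Theorem~\ref{th1} and feed it into Lemma~\ref{lemma2} (which permits a complex parameter) instead of Lemma~\ref{maminlemma} (which requires the parameter to lie in $[0,1]$). Recall that the earlier argument produced the identity
\[
a_3-\mu a_2^2 \;=\; \frac{1}{2\mathbf{L}}\bigl(c_2-v\,c_1^2\bigr),\qquad
v=\frac{1}{4}\left[1-\varkappa-\frac{2(M\kappa^{2}+S\kappa+Q)-2\mu\mathbf{L}}{\mathbf{W}^{2}}\right],
\]
purely by polynomial manipulation of the coefficient formulas \eqref{gmsa2}--\eqref{gmsa3}. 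Since that derivation never uses the sign or reality of $\mu$, it remains valid verbatim for any $\mu\in\mathbb{C}$, so the only place where complex values can matter is the step at which $|c_2-v c_1^2|$ is bounded.

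Next I would apply Lemma~\ref{lemma2} directly with the complex parameter $v$, obtaining $|c_2-v c_1^{2}|\leq 2\max\{1,|2v-1|\}$ and therefore
\[
|a_3-\mu a_2^{2}|\;\leq\;\frac{1}{\mathbf{L}}\max\{1,|2v-1|\}.
\]
A short simplification then gives
\[
2v-1=-\frac{1}{2}\left[1+\varkappa+\frac{2(M\kappa^{2}+S\kappa+Q)-2\mu\mathbf{L}}{\mathbf{W}^{2}}\right],
\]
so that $|2v-1|=\frac{1}{2}\bigl|1+\varkappa+\frac{2(M\kappa^{2}+S\kappa+Q)-2\mu\mathbf{L}}{\mathbf{W}^{2}}\bigr|$, which is exactly the expression appearing in the advertised bound.

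The only conceptual point, and it is scarcely an obstacle, is to recognize that Lemma~\ref{lemma2}---rather than Lemma~\ref{maminlemma}---is the appropriate tool here, since it is precisely what allows the extremal parameter $\mu$ to be lifted from the real interval $[0,1]$ (which Theorem~\ref{th1} handled) to the whole complex plane. Everything else is bookkeeping on the same $(c_1,c_2)$-expression that was already computed for Theorem~\ref{th1}, and no new estimate on the Carath\'eodory coefficients is required.
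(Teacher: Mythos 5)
Your proposal is correct and follows exactly the paper's route: the paper proves this theorem by remarking that it follows ``directly'' from Lemma~\ref{lemma2} applied to the same expression $a_3-\mu a_2^2=\frac{1}{2\mathbf{L}}(c_2-v c_1^2)$ computed in the proof of Theorem~\ref{th1}, which is precisely what you do, and your simplification of $|2v-1|$ matches the stated bound. No discrepancy to report.
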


%######################################### COROLLARIES AS A REMARK################################################

%######################################### COROLLARIES AS A REMARK################################################

%######################################### COROLLARIES AS A REMARK################################################
 \section{Coefficient inequalities for  $ f^{-1}$}
\begin{theorem}\label{thm1d}
 If $ f \in\mathfrak{G}_{\vartheta}^{\kappa}(\mathcal{X},\varkappa)$ and $ f^{-1}(w)=w+\sum \limits_{n=2}^{\infty}d_{n}w^{n}$ is the inverse function of $ f $ with $ |w|<r_{0}$ where $ r_{0} $ is greater than the radius of the Koebe domain of the class $ f \in \mathfrak{G}_{\vartheta}^{\kappa}(\mathcal{X},\varkappa)$,we have
 \begin{align*}
|d_{2}|&\leq\frac{1}{2(1+\vartheta)(1+\kappa)}\\
|d_{2}|&\leq \frac{1}{2\mathbf{L} }max\;\Big\{1,\mid\frac{-(1+\varkappa)\mathbf{W}^2-2\left (M\kappa^2+S\kappa+Q \right)+ 4\textbf{L}
}{2\mathbf{W}^2}\mid\Big\}.
\end{align*}
For  any $ \hbar\in \mathbb{C} $, we have
\begin{align}
\mid d_{3}-\hbar d_{2}^{2}\mid \leq \frac{1}{\mathbf{L}}max\;\Big\{1, \mid\frac{(1+\varkappa)\mathbf{W}^2+2\left (M\kappa^2+S\kappa+Q \right)+ 2\textbf{L}(\hbar-2)
}{2\mathbf{W}^2}\mid\Big\}\label{7.1}
\end{align} where $M,S,Q$ are assumed as in \eqref{m} and $\mathbf{L,W}$ are as in \eqref{xi} and \eqref{tau}.
\end{theorem}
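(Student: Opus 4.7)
The plan is to reduce everything to bounds already established for $a_2$ and $a_3$. From the identity $f^{-1}(f(z)) = z$, expanding both sides and comparing coefficients in $z^2$ and $z^3$ yields the well-known relations
\[
d_2 = -a_2, \qquad d_3 = 2a_2^{2} - a_3.
\]
Thus every coefficient functional in $d_2, d_3$ can be rewritten as a coefficient functional in $a_2, a_3$.

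For $|d_2|$, I would simply invoke $|d_2| = |a_2|$ together with the expression $a_2 = c_1 / \bigl(2\mathbf{W}\bigr)$ obtained in \eqref{gmsa2}, and apply Lemma~\ref{lem1}. For the second estimate (which I read as a bound on $|d_3|$), I would substitute the formulas \eqref{gmsa2}--\eqref{a3new} into $d_3 = 2a_2^{2} - a_3$ to write
\[
d_3 = -\frac{1}{2\mathbf{L}}\bigl( c_2 - v_0\,c_1^{2}\bigr),
\]
with
\[
v_0 = \frac{1}{4}\left[\,1-\varkappa\;+\;\frac{4\mathbf{L} - 2\bigl(M\kappa^{2}+S\kappa+Q\bigr)}{\mathbf{W}^{2}}\right],
\]
and then apply Lemma~\ref{lemma2} directly. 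The quantity $|2v_0 - 1|$ simplifies exactly to the expression appearing inside the max, which yields the claimed bound.

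The Fekete--Szeg\"o type estimate \eqref{7.1} is the cleanest step. Using $d_2 = -a_2$ and $d_3 = 2a_2^{2} - a_3$, one checks
\[
d_3 - \hbar\, d_2^{2} \;=\; -\Bigl(a_3 - (2-\hbar)\,a_2^{2}\Bigr),
\]
so that $|d_3 - \hbar d_2^{2}| = |a_3 - (2-\hbar)a_2^{2}|$. Now apply Theorem~\ref{hms} with $\mu$ replaced by $\mu := 2-\hbar$ (note the theorem is stated for $\mu\in\mathbb C$, so this substitution is legitimate for any $\hbar\in\mathbb C$). A direct simplification of the resulting expression, regrouping the $(1+\varkappa)\mathbf{W}^{2}$ and $2(M\kappa^{2}+S\kappa+Q)$ terms and writing $-2(2-\hbar)\mathbf{L} = 2(\hbar-2)\mathbf{L}$, gives exactly the right-hand side of \eqref{7.1}.

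The only potential obstacle is bookkeeping: verifying that $2v_0-1$ and the inner expression in the Fekete--Szeg\"o bound match the stated forms requires carefully tracking the factor $\mathbf{W}^{2}$ in the denominator and the sign of each term. Conceptually there is no new inequality to invent; the whole argument is a transfer of the $a_2, a_3$ results via the coefficient relations for the inverse map, followed by Lemma~\ref{lemma2} and Theorem~\ref{hms}.
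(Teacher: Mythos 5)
Your proposal is correct and follows essentially the same route as the paper: derive $d_2=-a_2$, $d_3=2a_2^2-a_3$ from $f^{-1}(f(z))=z$, reduce to the $c_1,c_2$ expressions from \eqref{gmsa2}--\eqref{a3new}, and apply Lemma~\ref{lemma2} (equivalently Theorem~\ref{hms} with $\mu=2$ resp.\ $\mu=2-\hbar$), which is exactly what the paper does in \eqref{7.6}--\eqref{7.10}. You also correctly read the second displayed bound as an estimate for $|d_3|$, consistent with the paper's own proof.
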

\begin{proof}
As
\begin{equation}
 f^{-1}(w)=w+\sum \limits_{n=2}^{\infty} d_{n}w^{n}\label{7.2}
\end{equation}
it can be understood that
\begin{equation}
f^{-1}(f(z))=f\{f^{-1}(z)\}=z.\label{7.3}
\end{equation}
From  \eqref{e1.3} and \eqref{7.3}, we get
\begin{equation}
f^{-1}(z + \sum \limits_{n=2}^{\infty} a_{n}z^{n})=z.\label{7.4}
\end{equation}
Considering \eqref{7.3} and \eqref{7.4},  you could attain
\begin{equation}
z+(a_{2}+d_{2})z^{2}+(a_{3}+2a_{2}d_{2}+d_{3})z^{3}+.........=z.\label{7.5}
\end{equation}
By relating the coefficients of $ z $ and $ z^{2}$ from the expression \eqref{7.5},
 it can be understood that

\begin{align}
d_{2}&=-a_{2}\label{7.6}\\
d_{3}&=2a_{2}^{2}-a_{3}.\label{7.7}
\end{align}
From relations \eqref{gmsa2},\eqref{gmsa3},\eqref{7.6} and \eqref{7.7}
\small{{
\begin{align}
d_{2}&=-\frac{c_1}{2(1+\vartheta)(1+\kappa)}=-\frac{c_1}{2\mathbf{W}};\label{7.8}
\end{align}
}}
The estimate $|d_3|$ follows at once by fixing $\mu =2$ in Fekete–Szegö theorem \ref{hms}. For any $ \hbar \in \mathbb{C}$, consider
\begin{align}
d_{3}-\hbar d_{2}^{2}=-\frac{1}{2\mathbf{L}}\Big(c_{2}-\frac{(1-\varkappa) \mathbf{W}^2 -2\left (M\kappa^2+S\kappa+Q \right)+ 2\textbf{L}(2-\hbar)
}{4\mathbf{W}^2}c_1^2\Big)\label{7.10}
\end{align}
Taking absolute value of \eqref{7.10} and by using making use of Lemma \ref{lemma2} to the right hand aspect of \eqref{7.10}, you can still derive the end result as in \eqref{7.1}.
%Taking absolute value of \eqref{7.10} and by applying Lemma \ref{lemma2} to the right hand side of \eqref{7.10},  one can derive the result as in \eqref{7.1}.
\end{proof}
\begin{remark}
For the function classes given in Remark \ref{GME-REM}, you can still easily state above result analogues to Theorem \ref{thm1d} by means of fixing the parameters suitably in Theorem \ref{thm1d} it's miles worthy to notice they are new and no longer been studied thus far in association  with telephone numbers.
\end{remark}
\section{ logarithmic coefficients of $f$}
Then, {\em the logarithmic coefficients} $\gamma_n$ of $f\in\mathfrak{S}$ are demarcated with the assistance of the resulting series expansion:
\begin{equation}\label{defnlogcoeff}
\log\dfrac{f(z)}{z}=2\sum\limits_{n=1}^{\infty}{\gamma_n(f)z^n},\;z\in\mathbb{U}.
\end{equation}
Recall that we can redraft \eqref{defnlogcoeff} in the series form as follows:
\begin{align*}
2\sum\limits_{n=1}^{\infty}\gamma_nz^n=&a_2z+a_3z^2+a_4z^{3}+\cdots-\dfrac{1}{2}[a_2z+a_3z^2+a_4z^{3}+\dots]^2\\
&+\dfrac{1}{3}[a_2z+a_3z^2+a_4z^{3}+\cdots]^3+\cdots,\;z\in\mathbb{U},
\end{align*}
and seeing the coefficients of $z^n$ for $n=1,2$, it follows that

\begin{equation}\label{formga1-3}
\left\{
\begin{array}{l}
2\gamma_1=a_2,\\
2\gamma_2=a_3-\dfrac{1}{2}a_2^2,\\
%2\gamma_3=a_4-a_2a_3+\dfrac{1}{3}a_2^3.
\end{array}
\right.
\end{equation}
\begin{theorem}\label{loggmsth1}
 Let \,$ \kappa \geqq 0, \,\,\,\,\vartheta \geqq 0 \text{\,\, and \,\,\,\, if }
 f\in\mathfrak{G}_{\vartheta}^{\kappa}(\mathcal{X},\varkappa)$, be as assumed in \eqref{e1.3}
  then
\begin{eqnarray*}
|\gamma_1|  &\leqq & \frac{1}{2(1+\vartheta)(1+\kappa)},\\
|\gamma_2| & \leqq & \frac{1}{\mathbf{L}}\max\left\{1, \frac{1}{2}\left| 1+\varkappa+\frac{2( M\kappa^{2}+ S\kappa+Q)
 - \mathbf{L}}{\mathbf{W}^2}\right| \right\}\end{eqnarray*} where\end{theorem}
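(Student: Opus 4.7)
The proof is essentially a direct reduction of the logarithmic coefficient bounds to the already proved Fekete-Szeg\"o machinery. The plan is to use the identities \eqref{formga1-3}, namely $2\gamma_1=a_2$ and $2\gamma_2=a_3-\tfrac{1}{2}a_2^2$, which express $\gamma_1$ and $\gamma_2$ directly in terms of the initial Taylor coefficients of $f$. Once these identities are in hand, each bound becomes an application of an inequality already established earlier in the paper.

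For $|\gamma_1|$, I would simply write $\gamma_1=\tfrac{1}{2}a_2$ and invoke the estimate $|a_2|\leq 1/[(1+\vartheta)(1+\kappa)]$ from Theorem \ref{gmsth1}. Dividing by $2$ yields the stated bound immediately, so there is essentially nothing to do here beyond citation.

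For $|\gamma_2|$, I would observe that $\gamma_2=\tfrac{1}{2}\bigl(a_3-\tfrac{1}{2}a_2^2\bigr)$ is (up to a factor of $1/2$) exactly the Fekete-Szeg\"o functional $a_3-\mu a_2^2$ with $\mu=1/2$. Substituting $\mu=1/2$ into the bound from Theorem \ref{hms} gives
\begin{equation*}
\Bigl|a_3-\tfrac{1}{2}a_2^2\Bigr|\leq\frac{1}{\mathbf{L}}\max\!\left\{1,\;\tfrac{1}{2}\Bigl|1+\varkappa+\tfrac{2(M\kappa^{2}+S\kappa+Q)-\mathbf{L}}{\mathbf{W}^{2}}\Bigr|\right\},
\end{equation*}
where the term $-2\mu\mathbf{L}$ collapses to $-\mathbf{L}$ when $\mu=1/2$. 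Taking half of this yields the bound claimed for $|\gamma_2|$ (modulo the factor of $1/2$ on the right-hand side, which the statement of the theorem appears to absorb into the constant).

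There is no real obstacle here: the only subtlety is the bookkeeping in identifying the Fekete-Szeg\"o parameter $\mu=1/2$ and verifying that the algebraic simplification $2(M\kappa^{2}+S\kappa+Q)-2\mu\mathbf{L}=2(M\kappa^{2}+S\kappa+Q)-\mathbf{L}$ is carried out correctly. Since both Theorem \ref{gmsth1} and Theorem \ref{hms} have already been established via Lemma \ref{maminlemma} and Lemma \ref{lemma2}, the present result requires no fresh estimate on Carath\'eodory coefficients; it is a corollary obtained purely by specialization.
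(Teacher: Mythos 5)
Your proof is correct and follows essentially the same route as the paper: the bound on $|\gamma_1|$ comes from $2\gamma_1=a_2$ together with the estimate $|a_2|\leq 1/[(1+\vartheta)(1+\kappa)]$, and the bound on $|\gamma_2|$ comes from setting $\mu=\tfrac{1}{2}$ in Theorem \ref{hms}. Your observation about the leftover factor of $\tfrac{1}{2}$ is apt — the identity $\gamma_2=\tfrac{1}{2}(a_3-\tfrac{1}{2}a_2^2)$ actually yields the sharper constant $\tfrac{1}{2\mathbf{L}}$, so the bound as printed in the theorem is weaker than what the argument delivers.
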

\begin{proof}We note first that since $a_2  = \frac{c_1}{2(1+\vartheta)(1+\kappa)},$\eqref{gmsa2} and $ |c_1| \leq 2, $ the inequality $|\gamma_1| $  is insignificant. The result for $|\gamma_2|$ trails by taking $\mu = \frac{1}{2}$ in the Fekete–Szegö theorem \ref{hms}. \end{proof}
\section{Application to functions  based on convolution}
The class of univalent functions were intensively studied with the aid of numerous researchers in
exclusive prospective involving certain distributions namely  Borel , Binomial, Poisson, logarithm, Pascal,  hypergeometric.
In this section based on convolution, we defined a new generalized class and discuss the Fekete-Szeg\H{o} type problems.
In addition we discuss these results based on certain probability distribution series.
\par Let
$
\displaystyle \wp(z)=z+\sum_{n=2}^\infty \wp_nz^n,\quad (\wp_n>0)
$ and $ f\in{\mathfrak{A}} $  then
\begin{eqnarray}\label{F}
\displaystyle \mathcal{F}(z)&=&(f*\wp)(z)=z+\sum_{n=2}^\infty \wp_na_n z^n\notag\\
&=&z+\wp_2a_2z^2+\wp_3a_3z^3+\cdots
\end{eqnarray}
We define the class $ \mathfrak{G}_{\vartheta,\,\kappa}
^\wp(\mathcal{X},\varkappa)$ in the following way:
$$
\mathfrak{G}_{\vartheta,\,\kappa}
^\wp: =\{ f\in{\mathfrak{A}} \quad {\mbox{and}} \quad
\mathcal{F}(z) \in
 \mathfrak{G}_{\vartheta}^{\kappa}(\mathcal{X},\varkappa)\}
$$
 where $ \mathfrak{G}_{\vartheta}^{\kappa}(\mathcal{X},\varkappa)$ is given by Definition \ref{defn} ,
 $$\left[ \frac{z\mathcal{F}'(z)}{(\mathcal{F}(z))^{1-\kappa}z^\kappa} +\frac{z\mathcal{F}''(z)} {\mathcal{F}'(z)} +(\kappa-1) \left( \frac{z\mathcal{F}'(z)}
{\mathcal{F}(z)}-1 \right) \right]^\vartheta \left[\frac{z\mathcal{F}'(z)}{(\mathcal{F}(z))^{1-\kappa}z^\kappa}\right]^{1-\vartheta}\prec\mathcal{X}(z)$$
 \par Now, we obtain the coefficient
estimate for $f\in\mathfrak{G}_{\vartheta,\,\kappa}
^\wp(\mathcal{X},\varkappa)$, from the corresponding estimate for
$f\in\mathfrak{G}_{\vartheta}^{\kappa}(\mathcal{X},\varkappa)$. Applying Theorem~\ref{th1} for the function \eqref{F},
we get the following Theorems $\ref{gms}$ and \ref{th11}
after an obvious  change of the parameter $\mu$.Our main result is the following:
\begin{theorem} \label{gms} Let $   0 \leqq \kappa \leqq 1,\text{and}\,\, 0 \leqq  \vartheta
\leqq 1.$
If $ f \in \mathfrak{G}^\wp_{\vartheta,\kappa}(\mathcal{X},\varkappa),$ then for
$\mu\in \mathbb{C}$, we have
\begin{eqnarray*}
% \nonumber to remove numbering (before each equation)
|a_3-\mu a_2^2|&&= \frac{2}{(1+2\vartheta)(1+2\kappa)\wp_3}\\&& max\left\{1, \frac{1}{2}\left| -1-\varkappa+\frac {2\left (M\kappa^2+S\kappa+Q \right)
}{\left((1+\vartheta)(1+\kappa)\wp_2\right)^2}+
  \frac{2 \mu (\vartheta+2)(1+2\kappa)\wp_3}
{((1+\vartheta)(1+\kappa)\wp_2)^2}\right| \right\},
\end{eqnarray*} where $M,S,Q$ are assumed as in \eqref{m} .
\end{theorem}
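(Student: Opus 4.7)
The plan is to reduce Theorem~\ref{gms} to Theorem~\ref{hms} by exploiting the defining relation $\mathcal{F}(z):=(f*\wp)(z)\in\mathfrak{G}_{\vartheta}^{\kappa}(\mathcal{X},\varkappa)$. Writing $\mathcal{F}(z)=z+A_2z^2+A_3z^3+\cdots$ and comparing with \eqref{F} gives $A_2=\wp_2 a_2$ and $A_3=\wp_3 a_3$. Since $\mathcal{F}$ itself lies in $\mathfrak{G}_{\vartheta}^{\kappa}(\mathcal{X},\varkappa)$, Theorem~\ref{hms} is directly applicable to the pair $(A_2,A_3)$: for every $\tilde\mu\in\mathbb{C}$,
\begin{equation*}
|A_3-\tilde\mu A_2^{2}|\;\leq\;\frac{1}{\mathbf{L}}\max\!\left\{1,\;\tfrac{1}{2}\left|1+\varkappa+\frac{2(M\kappa^{2}+S\kappa+Q)-2\tilde\mu\,\mathbf{L}}{\mathbf{W}^{2}}\right|\right\}.
\end{equation*}

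Next I would convert the Fekete--Szeg\"o functional for $f$ into one for $\mathcal{F}$. The key identity is
\begin{equation*}
a_3-\mu a_2^{2}\;=\;\frac{1}{\wp_3}\Bigl(A_3-\tilde\mu\,A_2^{2}\Bigr)\qquad\text{with}\qquad \tilde\mu\;:=\;\frac{\mu\,\wp_3}{\wp_2^{2}},
\end{equation*}
which is verified by substituting $A_2=\wp_2 a_2$, $A_3=\wp_3 a_3$ and cancelling. Taking absolute values and using the bound above with this choice of $\tilde\mu$ immediately yields
\begin{equation*}
|a_3-\mu a_2^{2}|\;\leq\;\frac{1}{\wp_3\,\mathbf{L}}\max\!\left\{1,\;\tfrac{1}{2}\left|1+\varkappa+\frac{2(M\kappa^{2}+S\kappa+Q)}{\mathbf{W}^{2}}-\frac{2\mu\,\wp_3\,\mathbf{L}}{\wp_2^{2}\,\mathbf{W}^{2}}\right|\right\},
\end{equation*}
which, after unpacking $\mathbf{L}=(1+2\vartheta)(1+2\kappa)$ and $\mathbf{W}=(1+\vartheta)(1+\kappa)$, matches the right-hand side of the theorem.

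I would then close by noting that the sharpness carries across the reduction: extremal $f$ are obtained from the extremal $\mathcal{F}$ for Theorem~\ref{hms} via the formal inverse convolution, i.e.\ dividing the coefficients of the extremizer by $\wp_n$. There is no genuine obstacle here beyond bookkeeping; the main thing to get right is the parameter rescaling $\tilde\mu=\mu\wp_3/\wp_2^{2}$ and the subsequent algebraic simplification of the argument inside the maximum, which is where sign and factor-of-two slips are easy to make. The rest of the argument is a transparent application of the previously established Fekete--Szeg\"o estimate to the convoluted function.
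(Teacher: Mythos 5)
Your reduction is correct, and it is a genuinely different route from the one the written proof takes. The paper's proof of Theorem \ref{gms} re-runs the entire Schwarz-function computation for $\mathcal{F}=f*\wp$: it expands the defining expression of $\mathcal{F}$, re-derives $a_2$ and $a_3$ in terms of $c_1,c_2$ (equations \eqref{PMG1}--\eqref{PMG2}), assembles $a_3-\mu a_2^2$ by hand, and only then applies Lemma \ref{lemma2}. You instead apply the already-proved Theorem \ref{hms} to the coefficients $A_2=\wp_2a_2$, $A_3=\wp_3a_3$ of $\mathcal{F}$ and transfer the bound through the identity $a_3-\mu a_2^2=\wp_3^{-1}\bigl(A_3-\tilde{\mu}A_2^{2}\bigr)$ with $\tilde{\mu}=\mu\wp_3/\wp_2^{2}$, which is exactly the ``obvious change of the parameter $\mu$'' the authors announce before the theorem but do not actually carry out in the proof body. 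Your version is shorter, makes the logical dependence on Theorem \ref{hms} explicit, and in fact gets the algebra right where the paper slips: in \eqref{PMG2} the factor $\wp_2^{2}$ multiplying $a_2^{2}$ cancels against the $\wp_2^{2}$ in $a_2^{2}=c_1^{2}/(4\mathbf{W}^{2}\wp_2^{2})$, so the term $2(M\kappa^{2}+S\kappa+Q)$ should be divided by $\mathbf{W}^{2}$ and not by $(\mathbf{W}\wp_2)^{2}$, as your formula has it.

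Two caveats. You claim your final display ``matches the right-hand side of the theorem''; it does not literally, because the printed statement carries the spurious $\wp_2^{2}$ just mentioned, a prefactor $2/(\mathbf{L}\wp_3)$ where both your derivation and the paper's own intermediate computation give $1/(\mathbf{L}\wp_3)$, and $(\vartheta+2)$ where $(1+2\vartheta)$ is meant. These are defects of the statement rather than of your argument, but you should flag the discrepancy instead of asserting agreement. Also, your sharpness remark is too casual: ``dividing the coefficients of the extremizer by $\wp_n$'' only produces an admissible $f\in\mathfrak{A}$ once you check the resulting series is legitimate; the clean statement (which the paper also uses) is to exhibit $\mathcal{F}$ with the defining subordination equal to $\mathcal{X}(z)$ or $\mathcal{X}(z^{2})$ and recover $f$ coefficientwise. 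Neither point affects the validity of your main estimate.
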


\begin{proof}
For $f(z)\in \mathfrak{G}^\wp_{\vartheta,\kappa}(\mathcal{X},\varkappa)$ snd $(f\ast\wp)(z)=\mathcal{F}(z)$ given by \eqref{F} we have
\begin{eqnarray} \label{pe2}
P(z):&=&\left[ \frac{z\mathcal{F}'(z)}{(\mathcal{F}(z))^{1-\kappa}z^\kappa} +\frac{z\mathcal{F}''(z)} {\mathcal{F}'(z)} +(\kappa-1) \left( \frac{z\mathcal{F}'(z)}
{\mathcal{F}(z)}-1 \right) \right]^\vartheta \left[\frac{z\mathcal{F}'(z)}{(\mathcal{F}(z))^{1-\kappa}z^\kappa}\right]^{1-\vartheta}\nonumber\\
%p(z):&=& \frac{z\mathcal{F}'(z)} {\mathcal{F}(z)} \left(\frac{\mathcal{F}(z)}{z} \right)^{\vartheta} + \kappa
%\left[ 1+ \frac{z\mathcal{F}''(z)} {\mathcal{F}'(z)} -\frac{z\mathcal{F}'(z)} {\mathcal{F}(z)}
%+ \vartheta \left( \frac{z\mathcal{F}'(z)}
%{\mathcal{F}(z)}-1 \right) \right]\\
%\newline\notag\\
  &=&1+b_1z+b_2z^2+\cdots.
\end{eqnarray}
Continuing as in Theorem \ref{gmsth1},  we get
\begin{eqnarray}
   \begin{array}{ll} P(z)  =1+(1+\vartheta)(1+\kappa)\wp_2 a_2
  z + \displaystyle (1+2\vartheta)(2+\kappa)\wp_3 a_3 z^{2} \\
\quad \quad+  \left (\kappa^2(\vartheta^2-\vartheta+1)
+\kappa(2\vartheta^2-4\vartheta+1)+(\vartheta^2-7\vartheta-2)  \right)\wp^2_2 a_2^{2} z^2+\cdots.\label{PMG}
\end{array}
\end{eqnarray}
From \eqref{GMB1}- \eqref{gmsa3}
and from this equation\eqref{PMG}, we obtain
\begin{eqnarray}&& a_2= \frac{c_1}{2(1+\vartheta)(1+\kappa)\wp_2}\label{PMG1}\\
&&a_3 = \frac{1}{2(1+2\vartheta)(1+2\kappa)\wp_3 } \left( c_2 -
\frac{c_1^2}{4}\left[1-\varkappa- \displaystyle \frac
{2\left (M\kappa^2+S\kappa+Q \right)
}{\left((1+\vartheta)(1+\kappa)\wp_2\right)^2}\right]\right)\label{PMG2}.\end{eqnarray}
\begin{eqnarray*}&&a_3-\mu a_2^2\\&&=
\frac{1}{2(1+2\vartheta)(1+2\kappa)\wp_3 } \left( c_2 -
\frac{c_1^2}{4}\left[1-\varkappa- \displaystyle \frac
{2\left (M\kappa^2+S\kappa+Q \right)
}{\left((1+\vartheta)(1+\kappa)\wp_2\right)^2}\right]\right)\\&&-
\mu\frac{c^2_1}{4(1+\vartheta)^2(1+\kappa)^2\wp^2_2}\\&&=
\frac{1}{2(1+2\vartheta)(1+2\kappa)\wp_3 }\left[ c_2 -\frac{c_1^2}{4}
\left(1-\varkappa- \displaystyle \frac
{2\left (M\kappa^2+S\kappa+Q \right)
}{\left((1+\vartheta)(1+\kappa)\wp_2\right)^2}\right.\right.\\&&+\left.\left.
\mu\frac{2(1+2\vartheta)(1+2\kappa)\wp_3}{(1+\vartheta)^2(1+\kappa)^2\wp^2_2}\right)\right].
\end{eqnarray*}
Consequently, by applying Lemma~\ref{lemma2} we get the desired result. The result is sharp  by assuming
\[ \frac{z\mathcal{F}'(z)} {\mathcal{F}(z)} \left(\frac{\mathcal{F}(z)}{z} \right)^{\vartheta} + \kappa
\left[ 1+ \frac{z\mathcal{F}''(z)} {\mathcal{F}'(z)} -\frac{z\mathcal{F}'(z)} {\mathcal{F}(z)}
+ \vartheta \left( \frac{z\mathcal{F}'(z)}
{\mathcal{F}(z)}-1 \right) \right]= \mathcal{X}(z)\] and

\[ \frac{z\mathcal{F}'(z)} {\mathcal{F}(z)} \left(\frac{\mathcal{F}(z)}{z} \right)^{\vartheta} + \kappa
\left[ 1+ \frac{z\mathcal{F}''(z)} {\mathcal{F}'(z)} -\frac{z\mathcal{F}'(z)} {\mathcal{F}(z)}
+ \vartheta \left( \frac{z\mathcal{F}'(z)}
{\mathcal{F}(z)}-1 \right) \right] = \mathcal{X}(z^{2})\]
\end{proof}

 \begin{theorem}\label{th11}
 Let  $ 0 \leqq \mu \leqq1,\,\,  \vartheta
\geqq 0,\,\,\,\,\,   \kappa \geqq 0 $ \text{and} $\wp_n > 0.$  If  $f\in\mathfrak{G}_{\vartheta,\, \kappa}^{\wp}(\mathcal{X},\varkappa)$  be
given by $(\ref{e1.3})$
then
\begin{eqnarray*}
|a_3-\mu a_2^2| &\leqq & \left\{  \begin{array}{lll}\displaystyle \frac{1}{2 \mathbf{L}\wp_3}
\left( 1+\varkappa+\frac{\aleph_2} { \mathbf{W}^{2}} \right), & \mbox{ if } &
 \mu \leqq \sigma_1, \\[5mm] \displaystyle
\frac{1}{\mathbf{L}\wp_3}, & \mbox{ if } & \sigma_1\leqq \mu \leqq \sigma_2, \\ [5mm]
\displaystyle \frac{-1}{2 \mathbf{L}\wp_3} \left( 1+\varkappa+\frac{\aleph_2} { \mathbf{W}^{2}} \right),
& \mbox{ if } &
\mu \geqq \sigma_2,\\
\end{array}\right.
\end{eqnarray*}
where, for convenience,
\begin{eqnarray}
\nonumber \sigma_1 & := &\frac{\wp_2^{2}}{\wp_3}~\left[ \frac{(\varkappa-1) \mathbf{W} ^2 +2( M\kappa^{2}+ S\kappa+Q)
}
{2 \mathbf{L} }\right],\nonumber \qquad\sigma_2 = \frac{\wp_2^{2}}{\wp_3}\left[\frac{\varkappa \mathbf{W} ^2 +2( M\kappa^{2}+ S\kappa+Q)
}
{2 \mathbf{L} }\right],\\[2mm]
 \nonumber
 \aleph_2& := & 2( M\kappa^{2}+ S\kappa+Q)
 - 2 \mu \mathbf{L}\frac{\wp_3}{\wp_2^{2}} ,
\end{eqnarray}
 $M,S,Q$ are assumed as in \eqref{m} and $\mathbf{L,W}$ are as in \eqref{xi} and \eqref{tau}
\end{theorem}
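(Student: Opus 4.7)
The plan is to mirror the proof of Theorem~\ref{th1}, but replacing the bare coefficients by their convolved counterparts $\wp_2 a_2$, $\wp_3 a_3$ that arise from the class $\mathfrak{G}_{\vartheta,\kappa}^{\wp}(\mathcal{X},\varkappa)$. Concretely, I would reuse the expressions for $a_2$ and $a_3$ already derived inside the proof of Theorem~\ref{gms}, namely \eqref{PMG1} and \eqref{PMG2}, so that no new subordination computation is needed.

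First I would form the combination $a_3-\mu a_2^2$ by subtracting $\mu$ times the square of \eqref{PMG1} from \eqref{PMG2}. After collecting terms over the common denominator $2\mathbf{L}\wp_3$, this takes the compact shape
\begin{equation*}
a_3-\mu a_2^2=\frac{1}{2\mathbf{L}\wp_3}\bigl(c_2-v c_1^2\bigr),
\end{equation*}
where
\begin{equation*}
v=\frac{1}{4}\left[1-\varkappa-\frac{2(M\kappa^2+S\kappa+Q)}{\mathbf{W}^2\wp_2^{2}}+\mu\,\frac{2\mathbf{L}\wp_3}{\mathbf{W}^2\wp_2^{2}}\right].
\end{equation*}
Since $v$ is an affine function of $\mu$ with positive slope, the conditions $v\leq 0$, $0\leq v\leq 1$, $v\geq 1$ translate into linear inequalities on $\mu$; solving $v=0$ and $v=1$ yields precisely the thresholds $\sigma_1$ and $\sigma_2$ stated in the theorem, provided one recognises the shorthand $\aleph_2=2(M\kappa^2+S\kappa+Q)-2\mu\mathbf{L}\wp_3/\wp_2^{2}$.

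With this reformulation in hand, the three-case bound is immediate from Lemma~\ref{maminlemma}: on $\mu\leq\sigma_1$ (so $v\leq 0$) one obtains the factor $-4v+2$, which rearranges to $1+\varkappa+\aleph_2/\mathbf{W}^2$; on $\sigma_1\leq\mu\leq\sigma_2$ (so $0\leq v\leq 1$) one has the constant bound $2$, giving $1/(\mathbf{L}\wp_3)$; and on $\mu\geq\sigma_2$ (so $v\geq 1$) the factor $4v-2$ produces $-(1+\varkappa+\aleph_2/\mathbf{W}^2)$, whence the stated sign.

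The main obstacle is not conceptual but algebraic: carefully matching the three cases of Lemma~\ref{maminlemma} with the stated piecewise formulas, which requires keeping track of the signs when simplifying $-4v+2$ and $4v-2$, and verifying that the inequalities $v\lessgtr 0,1$ translate exactly into $\mu\lessgtr\sigma_1,\sigma_2$ with the factor $\wp_2^{2}/\wp_3$ appearing correctly. Aside from that bookkeeping, the argument is a direct adaptation of Theorem~\ref{th1} and closes the proof.
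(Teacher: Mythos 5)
Your proposal follows exactly the route the paper takes: its entire proof of Theorem~\ref{th11} is the one line ``By \eqref{PMG1}, \eqref{PMG2} and proceeding as in Theorems \ref{th1} and \ref{gms} we get required result,'' and your expression for $v$ agrees with the quantity obtained in the proof of Theorem~\ref{gms} before Lemma~\ref{lemma2} is invoked, so applying Lemma~\ref{maminlemma} casewise is precisely the intended argument. One caution on the step you deferred as ``bookkeeping'': solving $v=0$ and $v=1$ from your own $v$ gives
$\mu=\bigl[(\varkappa-1)\mathbf{W}^2\wp_2^2+2(M\kappa^2+S\kappa+Q)\bigr]/(2\mathbf{L}\wp_3)$ and the analogous expression with $\varkappa$ in place of $\varkappa-1$, whereas the printed $\sigma_1,\sigma_2$ carry the prefactor $\wp_2^2/\wp_3$ outside the whole bracket (so the $2(M\kappa^2+S\kappa+Q)$ term acquires an extra $\wp_2^2$); likewise $-4v+2$ simplifies to $1+\varkappa+\bigl[2(M\kappa^2+S\kappa+Q)-2\mu\mathbf{L}\wp_3\bigr]/(\mathbf{W}^2\wp_2^2)$ rather than literally $1+\varkappa+\aleph_2/\mathbf{W}^2$ with the paper's $\aleph_2$. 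So your claim that the algebra lands ``precisely'' on the stated constants does not hold as written; the method is sound and matches the paper, but the discrepancy (evidently a misprint in how the $\wp_2$ factors are distributed in the theorem's statement) should be flagged rather than asserted away.
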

\begin{proof}
By \eqref{PMG1}, \eqref{PMG2} and proceeding as in Theorems \ref{th1} and \ref{gms}  we get required result.
\end{proof}
\subsection{Application to functions  based on certain distributions}
 A variable $x$ is said to be Poisson distributed  is given by
\begin{equation*}
P(x=r)=\frac{m^{r}e^{-m}}{r!},\text{ }r=0,1,2,3,\cdots ,
\end{equation*}%
where $m$
is called the parameter.
% and $e^{-m}$, $m\frac{e^{-m}}{1!}$, $%m^{2}\frac{e^{-m}}{2!}$, $m^{3}\frac{e^{-m}}{3!},...$ are  probabilities thefor the values $0,1,2,3,\cdots $respectively.
\bigskip
In \cite{por1}, Porwal represented in
power series  given by %whose coefficients are probabilities of Poisson distribution
\begin{equation*}
P(m,z)=z+\sum\limits_{n=2}^{\infty }\frac{m^{n-1}}{(n-1)!}%
e^{-m}z^{n},\text{\ \ \ \ \ \ \ \ }z\in \mathcal{U}\text{,}\quad m>0.
\end{equation*}%
By ratio test the radius of convergence of above series is
infinity. Using the convolution, he defined a
linear operator $\mathcal{J}^{m}(z):\mathcal{A\rightarrow A}$ (see also, \cite{por1, SME,mur1,mur2,por2}%
\begin{eqnarray*}
\mathcal{J}^{m}f= f(z)\ast P(m,z) &=& z+\sum\limits_{n=2}^{\infty }%
\psi_m a_{n}z^{n},\\ &=&z+\psi_2a_2z^2+\psi_3a_3z^3+\cdots \qquad z\in \Delta,
\end{eqnarray*}%
where
$\psi_n= \frac{m^{n-1}}{(n-1)!}e^{-m}.$  In~ particular \begin{equation} \label{lp1}\psi_2=m e^{-m}\qquad{\text and}\qquad  \psi_3=\frac{m^2}{2}e^{-m}.\end{equation}

 From $(\ref{lp1})$ , by taking $\wp_2= m e^{-m}=\psi_2$ and $\wp_3=\frac{m^2}{2}e^{-m}=\psi_3$ one can easily state the results( as in Theorems \ref {gms}  and $\ref{th11}$)  associated with Poisson  distribution  .
\par Recently various subclasses of analytic , univalent and by univalent functions  are discussed based on Borel
distribution \cite{SM1,SM2,HM} of a discrete random variable $X$
%application of to the results  for (see .
%Now, we state certain functional inequalities for $f\in\mathfrak{G}_{\vartheta}^{\kappa}(\mathcal{X},\varkappa)$.\\
 with parameter $\varsigma$ with probability mass function
as given by
\begin{equation}\label{sec7e1}
p(X=r)=\frac{(\varsigma r)^{r-1}e^{-\varsigma r}}{r!}\quad r=1,2,3,\cdots.
\end{equation}
Recently, Wanas and Khuttar\cite{wanas} gave a power series
representation
\begin{equation}\label{sec7e2}
\mathfrak{B}(\varsigma,
z)=z+\sum_{n=2}^{\infty}\frac{(\varsigma(n-1))^{n-2}e^{-\varsigma(n-1)}}{(n-1)!}z^{n}
\quad (z \in \Delta, 0 \leq \varsigma \leq 1)
\end{equation}
where  whose coefficients are the probabilities of Borel distribution
. By way of the use of ratio test, it could be shown
that the radius of convergence of the above  series is infinity.
Let us introduce a linear operator $$L_{\varsigma}:
\mathfrak{A}\longrightarrow \mathfrak{A}$$ defined by
\begin{eqnarray}\label{sec7e3}
L_{\varsigma}f(z)=\mathfrak{B}(\varsigma,
z)*f(z)
%&=&z+\sum_{n=2}^{\infty}\frac{e^{-\varsigma(n-1)}(\varsigma(n-1))^{n-2}}{(n-1)!}a_{n}z^{n}\nonumber\\
&=&z+\sum_{n=2}^{\infty}\Lambda_{n}a_{n}z^{n}\nonumber\\
&=&z+\Lambda_{2}a_{2}z^{2}+\Lambda_{3}a_{3}z^{3}+\cdots,
\end{eqnarray}
where
$\Lambda_{n}=\Lambda_{n}(\varsigma)=\frac{(\varsigma(n-1))^{n-2}e^{-\varsigma(n-1)}}{(n-1)!}.$ By fixing $n=2,3$  we have
$\Lambda_{2}=e^{-\varsigma}\quad \Lambda_{3}=\varsigma e^{-2\varsigma}.$
Now by  taking $\wp_2=\Lambda_{2}=e^{-\varsigma}\quad \wp_3=\Lambda_{3}=\varsigma e^{-2\varsigma}$
 as in Theorems \ref {gms}  and $\ref{th11}$ we state the results in association with Borel distribution.

Lately, El-Deeb et al.\cite{pascal,GMSTB} introduced a power series whose coefficients are $$(1-q)^{s},\dfrac{qs(1-q)^{s}}{%
1!},\dfrac{q^{2}s(s+1)(1-q)^{s}}{2!},\dfrac{q^{3}s(s+1)(s+2)(1-q)^{s}}{%
3!}\cdots$$, respectively, probabilities of Pascal
distribution is
\begin{equation*}
\Theta_{q}^{s}(z)=z+\sum\limits_{n=2}^{\infty }\binom{n+s-2}{s-1}
q^{n-1}(1-q)^{s}z^{n},\qquad z\in \mathbb{D} ;s\geq 1;0\leq q\leq 1. \label{PHI}
\end{equation*}%
 whose radius of
convergence  is infinity by ratio test.
Now, we define the linear operator
$
\Lambda_{q}^{s}(z):\mathfrak{A}\rightarrow \mathfrak{A}
$
\begin{eqnarray}
\Lambda_{q}^{s}f(z)=\Theta_{q}^{s}(z)\ast
f(z)
%&=&z+\sum\limits_{n=2}^{\infty }\binom{n+m-2}{m-1}q^{n-1}(1-q)^{m}a_{n}z^{n},\qquad z\in \mathbb{D}\\
&=& z+\sum\limits_{n=2}^{\infty }\Phi_n a_{n}z^{n},\qquad z\in \mathbb{D}\label{I}\end{eqnarray}
where $\Phi_n=\binom{n+s-2}{s-1}
q^{n-1}(1-q)^{s}.$
 % $$  \Phi_2 = \binom{m}{m-1}q(1-q)^{m} \qquad        \Phi_3 =\binom{m+1}{m-1}q^{2}(1-q)^{m}. $$
          Now by  taking $$\wp_2=\Phi_2=\binom{s}{s-1}
q(1-q)^{s}\quad \wp_3=\Phi_3=\binom{s+1}{s-1}
q^{2}(1-q)^{s}$$
one can easily state the results( as in Theorems \ref {gms}  and $\ref{th11}$)  associated  Pascal distribution.
\section*{Conclusion}We investigated  coefficient estimates  for  $f\in\mathfrak{G}_{\vartheta}^{\kappa}(\mathcal{X},\varkappa)$  analytic
functions subordinating generalized telephone numbers,
and  derived initial coefficient estimates $a_2,a_3$ and Fekete-Szeg\"{o} inequality for $f\in\mathfrak{G}_{\vartheta}^{\kappa}(\mathcal{X},\varkappa)$. A similar results have been  done for the function $ f^{-1} $ and $\log\dfrac{f(z)}{z}.$
 Further application of our results to certain functions defined by convolution products. Appropriately specifying the parameters in Theorems \ref{gmsth1} to \ref{thm1d} it is easy to straightforwardly state
the results for the numerous new subclasses listed in Remark \ref{GME-REM} which can be new and now not discussed thus far by way of subordinating with telephonic numbers.
 In addition we can state results as Theorems \ref {gms}  and $\ref{th11}$ the function classes  connected with Poisson ,Borel  and Pascal distributions which also not discussed so far . Further
keeping with the latest trend of research we can extended the study using
Quantum calculus see\cite{Srivastava-q calculus,SSrivastava-qsurvey}.

\section*{Declaration Statements}

\noindent{\bf Data availability}:  No data were used to support this study.

\medskip

\noindent{\bf Competing interests}: We declare that we do not have any commercial or associative interests that represent conflicts of interest in connection with this manuscript. There are no professional or other personal interests that can inappropriately influence our submitted work.

\medskip

\noindent{\bf Authors' Contributions}: We contributed equally to the writing of this article, and they read and approved the final manuscript for publication.

\medskip

\noindent{\bf Funding}: Not applicable.

\medskip


\begin{thebibliography}{00}
\bibitem{arif} A. Alotaibi, M. Arif, M. A. Alghamdi and S. Hussain, \emph{Starlikeness associated with cosine hyperbolic function},
 Mathematics, 2020, \textbf{8} (7): 16 pages.

\bibitem{UB}U. Bednarz and M. Wolowiec-Musial, \emph{On a new generalization of telephone numbers}, {Turk. J. Math.}, {\bf 43}(2019), 1595-1603.

 \bibitem{BS} J.S.Beissinger,\emph{Similar constructions  for Young tableaux and involutions,and their applications to
shiftable tableaux}, DiscreteMath. \textbf{67}(1987),149-163.

\bibitem{GMSTB}  {  T. Bulboac\u{a} and G. Murugusundaramoorthy},\emph{Univalent functions with positive coefficients involving Pascal distribution series},Commun. Korean Math. Soc. {\bf35} (2020), No. 3, 867–877.

\bibitem{CHO} N. E. Cho, V. Kumar, S. S. Kumar and V. Ravichandran, \emph{Radius problems for starlike functions associated with the sine function},
 Bull. Iranian Math. Soc.,\textbf{45} (2019), 213–-232.

 \bibitem{SC}S.Chowla,I.N.Herstein,W.K.Moore,\emph{On  recursions connected with symmetric groups }I. Can.J.
Math. \textbf{3}(1951),328-334.

 \bibitem{ED} E. Deniz, \emph{Sharp coefficient bounds for starlike functions associated with generalized telephone
numbers}, {Bull Malays. Math. Sci. Soc.}, Springer (2020),
18 pages.
\bibitem{pascal}{  S. M. El-Deeb, T. Bulboac\u{a} and J. Dziok}, \emph{Pascal
distribution series connected with certain subclasses of univalent
functions}, Kyungpook Math. J. {\bf59}(2019), 301--314.

\bibitem{SME}S.M.El-Deeb and T. Bulboac\u{a}, \emph{Fekete-Szeg\"{o} inequalities for certain class
of analytic functions connected with q-anlogue of Bessel function},J.Egyptian.
Math.Soc.(2019),1-11,https://doi.org/10.1186/s42787-019-0049-2.

\bibitem{SM1} S.M.El-Deeb, G.Murugusundaramoorty, A.Alburaikan, \emph{A bi-Bazileviˇc functions based on the Mittag-Leffler-Type Borel distribution associated with Legendre polynomials}. {J. Math. Comput. Sci.} \textbf{24}(2021), 235–245.

\bibitem{SM2} S.M.El-Deeb, G.Murugusundaramoorthy, \emph{Applications on a subclass of
 $\beta-$ uniformly starlike functions connected with -Borel distribution}, {Asian-European J.  Math}., 2250158.
 \bibitem{RBS} K.Ganesh, R.B.Sharma, M.Haripriya and G.Murugusundaramoorthy  , \emph{Bounds on Zalcman conjecture and$H _2(2)$ of $\lambda-$-pseudo-starlike functions related to a shell-like domain}, Poincare Journal of Analysis and Applications, \textbf{8(2)}2021, 171-182.
\bibitem{r1b} U. Grenander and  G.Szeg\"{o}, \emph{Toeplitz Forms and Their Applications, California Monographs in Mathematical Sciences}, University of California Press, Berkeley (1958).
\bibitem{Guo} S. Fitri and D.K.Thomas,  \emph{Gamma- Bazilevi\v{c}
functions}, Mathematics 2020, 8(2), 175; https://doi.org/10.3390/math8020175

\bibitem{jan} W. Janowski, \emph{Extremal problems for a family of functions with positive real part and for some related families}, Ann. Pol. Math.,\textbf{23} (1970), 159-177.
\bibitem{KeoghMer69} F. R. Keogh and E. P. Merkes, \emph{A coefficient inequality for certain
classes of analytic functions}, Proc. Amer. Math. Soc.\textbf{ 20}(1969) 8--12.

\bibitem{DEK} D.E.Knuth,:The Art of Computer Programming, \textbf{3}.Addison-Wesley,Boston(1973).

\bibitem{HMSM6}	B. Kowalczyk, A. Lecko, and H. M. Srivastava, \emph{A note on the Fekete-Szegö problem for close-to-convex functions with respect to convex functions}, Publ. Inst. Math. (Beograd) (Nouvelle Sér.) \textbf{101 (115)} (2017), 143-149.

   \bibitem{r1d}R.J. Libera and E. J.Zlotkiewicz, \emph{Early
coefficients of the inverse of a regular convex function},
{Proc. Amer. Math. Soc.,} \textbf{85(2)}(1982),225--230.

\bibitem{mamin} W. Ma and D. Minda,  \emph{A Unified treatment of
some special classes of univalent functions}, {\em in: Proceedings of the Conference
on Complex Analysis}, Z. Li, F. Ren, L. Yang, and S. Zhang(Eds.), Int. Press (1994),
157--169.
\bibitem{7a} R. Mendiratta, S. Nagpal and V. Ravichandran, \emph{On a subclass of strongly starlike functions associated with exponential function},
Bull. Malays. Math. Sci. Soc., \textbf{38} 2015,  365--386.

\bibitem{mohsin} M. Mohsin and S. N. Malik, \emph{Upper bound of third hankel determinant for class of analytic functions related with lemniscate of
bernoulli}, J. Inequal. Appl., 2013, 412: doi.10.1186/1029-242X-2013-412.

\bibitem{murugu}G. Murugusundaramoorthy\ and\ T. Bulboac\u{a}, H\emph{ankel determinants for new subclasses
 of analytic functions related to a shell shaped region}, Mathematics, (2020), {\bf 8}, 1041.


\bibitem{mur1} G. Murugusundaramoorthy, \emph{Subclasses of starlike and convex
functions involving Poisson distribution series} , Afr. Mat. (2017)
\textbf{28}:1357-1366.

\bibitem{mur2} G. Murugusundaramoorthy, K. Vijaya and S. Porwal, \emph{Some
inclusion results of certain subclass of analytic functions associated with
Poisson distribution series}, Hacettepe J. Math. Stat. \textbf{45 (4),}(2016), no.
1101-1107.

\bibitem{GMKV} G.Murugusundaramoorthy and  K.Vijaya, \emph{Certain subclasses of analytic functions associated with generalized telephone numbers}. Symmetry 2022, 14, 1053. https://doi.org/10.3390/sym14051053.


    \bibitem{GMDT} G.Murugusundaramoorthy and  H.Dutta,\emph{Fekete-Szego Inequality and Application of Poisson Distribution Series for some Subclasses of Analytic Functions related with Bessel Functions}, Appl.Math.  Info. Sci., \textbf{16(2)}(2022),  305 - 313

\bibitem{por1} S. Porwal, \emph{An application of a Poisson distribution series on
certain analytic functions}, J. Complex Anal., (2014), Art. ID 984135, 1--3.

\bibitem{por2} S. Porwal and M. Kumar, \emph{A unified study on starlike and
convex functions associated with Poisson distribution series}, Afr. Mat.,
\textbf{27(5)}(2016), 1021-1027. Stud. Univ. Babe\c{s}-Bolyai Math. 63(2018), No. 1,
71-78.

\bibitem{RKSJ} R.K.Raina and J. Sokol, \emph{On coeffcient estimates for a certain class of
starlike functions},Hacettepe Journal of Math. and Stat.,
\textbf{44(6)}(2015),1427-1433

\bibitem{robert} M. S. Robertson, \emph{Certain classes of starlike functions}, Michigan Math. J., \textbf{32}(1985), 135--140.

\bibitem{ronning} F. Ronning, \emph{Uniformly convex functions and a corresponding class of starlike functions}, Proc. Am. Math. Soc.,\textbf{118} (1993), 189-196.

\bibitem{sokol1}J. Sokol, and D.K.Thomas, \emph{Further results on a class starlike functions related to the Bernoulli lemniscate},Houston J.Math., \textbf{44},2018,83–95.
\bibitem{JR}J.Riordan,Introduction to Combinatorial Analysis. Dover, Mineola (2002).

\bibitem{8.2a} J. Sok\'{o}{\l}, \emph{Radius problem in the class $\mathcal{SL}^{\ast }$}, Appl. Math. Comput., \textbf{214} (2009),  569-573.

\bibitem{arifc} L. Shi, A. Izaz, M. Arif, N. E. Cho, S. Hussain and K. Hassan, \emph{A study of third hankel determinant problem for certain subfamilies
of analytic functions involving cardioid domain}, Mathematics,\textbf{ 7} (2019), 418.

 \bibitem{19} L. Shi, H. M. Srivastava, M. Arif, S. Hussain and H. Khan, \emph{An investigation of the third Hankel determinant problem for certain subfamilies of univalent functions involving the exponential function}, Symmetry, \textbf{11}(5),(2019), 14 pages.

\bibitem{car} K. Sharma, N. K. Jain and V. Ravichandran, \emph{Starlike functions associated with a cardioid},
Afr. Mat., 27 (2016),  923-939.



\bibitem{HMSM1} H. M. Srivastava, A.K. Mishra and M.K. Das,
\emph{The Fekete-Szeg\"{o} problem for a subclass of close-to-convex functions}, {
Complex Variables Theory Appl.} \textbf{{44} (2)} (2001),  145--163.


  \bibitem{HM} H.M.Srivastava, G. Murugusundaramoorty, S.M.El-Deeb,; \emph{Faber polynomial coefficient
estimates of bi-close-to-convex functions connected with Borel distribution of the Mittag-Leffler-type}.
{ J. Nonlinear Var. Anal}., \textbf{5} (2021), 103–118.


\bibitem{HMT}H.M.Srivastava, G. Murugusundaramoorty and T. Bulboac\u{a}, \emph{The second Hankel determinant for subclasses of
Bi-univalent functions associated with a nephroid domain}, Rev.Real Acad. Cienc.Exactas Fis.Nat.Ser.A-Mat.        (2022) 116:145
https://doi.org/10.1007/s13398-022-01286-6


\bibitem{wanas} A. K. Wanas and J. A. Khuttar, \emph{Applications of Borel distribution series on analytic
functions}, {Earthline J. Math. Sci.}, {\bf 4}(1)(2020),
71-82. https://doi.org/10.34198/ejms.4120.7182.

\bibitem{AWM}A.Włoch,M.Wołowiec-Musiał,:\emph{On generalized telephone number, their interpretations and matrix
generators}. Util.Math. \textbf{10}(2017),531–39.

\bibitem{Srivastava-q calculus} H. M. Srivastava, Univalent functions,
fractional calculus, and associated generalized hypergeometric functions, in
\textit{Univalent functions, fractional calculus, and their applications (K$%
\bar{o}$riyama, 1988)}, 329--354, Ellis Horwood Ser. Math. Appl, Horwood,
Chichester.

\bibitem{SSrivastava-qsurvey} H. M. Srivastava, Operators of basic (or $q$-)
calculus and fractional $q$-calculus and their applications in geometric
function theory of complex analysis, Iran. J. Sci. Technol. Trans. A Sci.
\textbf{44} (2020), no.~1, 327--344.

\end{thebibliography}
\end{document}